\documentclass[11pt,reqno]{amsart}
\usepackage{indentfirst, amssymb,amsmath,amsthm, mathrsfs,cite,graphicx,float}
\newtheorem*{theoA}{Theorem A}

\newtheorem{theo}{Theorem}[section]
\newtheorem{lem}{Lemma}[section]

\newtheorem{cor}{Corollary}[section]

\newtheorem{rem}{Remark}[section]

\newtheorem{open problem}{Open problem}[section]
\newcommand{\pa}{\partial}
\newcommand{\ol}{\overline}

\newcommand{\be}{\begin{equation}}
\newcommand{\ee}{\end{equation}}
\newcommand{\bs}{\begin{small}}
\newcommand{\es}{\end{small}}
\newcommand{\beas}{\begin{eqnarray*}}
\newcommand{\eeas}{\end{eqnarray*}}
\newcommand{\bea}{\begin{eqnarray}}
\newcommand{\eea}{\end{eqnarray}}
\renewcommand{\epsilon}{\varepsilon}
\numberwithin{equation}{section}
\begin{document}

\title[Pre-Schwarzian and Schwarzian norm estimates]{Pre-Schwarzian and Schwarzian norm estimates for certain classes of analytic and harmonic mappings}
\author[V. Allu, R. Biswas and R. Mandal]{Vasudevarao Allu, Raju Biswas and Rajib Mandal}
\date{}
\address{Vasudevarao Allu, Department of Mathematics, School of Basic Science, Indian Institute of Technology Bhubaneswar, Bhubaneswar-752050, Odisha, India.}
\email{avrao@iitbbs.ac.in}
\address{Raju Biswas, Department of Mathematics, Raiganj University, Raiganj, West Bengal-733134, India.}
\email{rajubiswasjanu02@gmail.com}
\address{Rajib Mandal, Department of Mathematics, Raiganj University, Raiganj, West Bengal-733134, India.}
\email{rajibmathresearch@gmail.com}
\let\thefootnote\relax
\footnotetext{2020 Mathematics Subject Classification: 30C45, 30C55.}
\footnotetext{Key words and phrases: Univalent functions, Harmonic functions, pre-Schwarzian norm, Schwarzian norm.}
\begin{abstract} Let $\mathcal{A}$ denote the class of all analytic functions $f$ in the unit disk $\mathbb{D}:=\{z\in\mathbb{C}: |z|<1\}$ such that $f(0)=f'(0)-1=0$.
In this paper, we introduce a new subclass $\mathcal{C}_\theta(\gamma)$ of $\mathcal{A}$ consisting of functions $f$ that satisfy the relation 
\beas \textrm{Re}\left(e^{i\theta}\left(1+\frac{zf''(z)}{f'(z)}\right)\right)<\left(1+\frac{\gamma}{2}\right)\cos\theta,~ z\in\mathbb{D},~ \gamma>0, ~\text{and}~|\theta|<\frac{\pi}{2},\eeas
and investigate the Schwarzian derivative and Schwarzian norm for functions $f$ belonging to the class $\mathcal{C}_\theta(\gamma)$. 
We establish sharp estimates for the Schwarzian norm $\|S_f\|$ of functions $f$ in the class $\mathcal{C}_{\theta}(\gamma)$ 
and derive univalence criteria using both pre-Schwarzian and Schwarzian norm estimates. We also introduce a 
corresponding harmonic class $\mathcal{HC}_{\theta}(\gamma)$ consisting of mappings $f = h+\overline{g}$ with $h\in\mathcal{C}_{\theta}(\gamma)$ 
and dilatation $\omega=g'/h'\in\mathrm{Aut}(\mathbb{D})$. For this harmonic class, we derive bounds for both the pre-Schwarzian and Schwarzian norms, including sharp results in special cases.
\end{abstract}
\maketitle
\section{Introduction}
\noindent An analytic function $f(z)$ in a domain $\Omega\subseteq\mathbb{C}$ is said to be locally univalent if for each $z_0\in\Omega$, there exists a neighborhood $N(z_0)$ of 
$z_0$ such that $f(z)$ is univalent in $N(z_0)$. The Jacobian of a complex-valued function $f=u+i v$ is defined by $J_f(z)=|f_z|^2 - |f_{\overline{z}}|^2$. 
It is well-known that the non-vanishing of the Jacobian is necessary and sufficient conditions for local univalence of analytic mappings (see \cite[Chapter 1]{D1983}). 
Let $\mathcal{S}$ denote the class of all analytic and univalent function $f$ in $\mathbb{D}$ normalized by $f(0)=f'(0)-1=0$.  Thus, every function $f$ in $\mathcal{S}$ has the following Taylor series expansion:
\bea\label{e2} f(z)=z+\sum_{n=2}^\infty a_n z^n.\eea
\indent Let $\mathcal{B}$ be the class of all analytic functions $\omega:\mathbb{D}\rightarrow\mathbb{D}$ and $\mathcal{B}_0=\{\omega\in\mathcal{B} : \omega(0)=0\}$. 
Functions in $\mathcal{B}_0$ are called Schwarz function. In view of the Schwarz's lemma, if $\omega\in\mathcal{B}_0$, then $|\omega(z)|\leq |z|$ and $|\omega'(0)|\leq 1$. 
Strict inequality holds in both estimates unless $\omega(z)=e^{i\theta}z$, $\theta\in\mathbb{R}$. For $\omega\in\mathcal{B}$, the Schwarz-Pick lemma, 
gives the estimate  $|\omega'(z)|\leq (1-|\omega(z)|^2)/(1-|z|^2)$ for $z\in\mathbb{D}$.  In 1931, Dieudonn\'e \cite{D1931} first obtained the exact region of variability of $\omega'(z_0)$ for a fixed $z_0\in\mathbb{D}$ for functions in the class $\mathcal{B}_0$.
\begin{lem}\label{lem1} \cite{D1931} \cite[Dieudonn\'e's Lemma, P. 198]{D1983} Let $\omega\in \mathcal{B}_0$ and $z_0\not=0$ be a fixed point in $\mathbb{D}$. The region of variability of $\omega'(z_0)$ is given by
\beas \left|\omega'(z_0)-\frac{\omega(z_0)}{z_0}\right|\leq \frac{|z_0|^2-\left|\omega(z_0)\right|^2}{|z_0|(1-|z_0|^2)}.\eeas
The equality occurs if, and only if, $\omega$ is a Blaschke product of degree $2$ fixing $0$.
\end{lem}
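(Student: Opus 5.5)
The plan is to reduce Dieudonn\'e's Lemma to the Schwarz--Pick lemma applied to the quotient $\varphi(z)=\omega(z)/z$.

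Since $\omega\in\mathcal{B}_0$, the function $\varphi(z)=\omega(z)/z$ (with $\varphi(0)=\omega'(0)$) is analytic in $\mathbb{D}$. By Schwarz's lemma $|\varphi(z)|\le 1$. If $|\varphi|=1$ somewhere, the maximum principle forces $\varphi\equiv e^{i\alpha}$, so $\omega(z)=e^{i\alpha}z$. In that case $\omega'(z_0)=\omega(z_0)/z_0$ and $|\omega(z_0)|=|z_0|$, so both sides of the inequality vanish and it holds trivially. Otherwise $\varphi\in\mathcal{B}$.

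Next, differentiate $\omega(z)=z\varphi(z)$ to get $\omega'(z_0)=\varphi(z_0)+z_0\varphi'(z_0)$, so that
\[
\omega'(z_0)-\frac{\omega(z_0)}{z_0}=z_0\varphi'(z_0).
\]
Applying the Schwarz--Pick estimate to $\varphi$ gives $|\varphi'(z_0)|\le (1-|\varphi(z_0)|^2)/(1-|z_0|^2)$. Substituting $|\varphi(z_0)|=|\omega(z_0)|/|z_0|$ yields
\[
|z_0\varphi'(z_0)|\le |z_0|\,\frac{1-|\omega(z_0)|^2/|z_0|^2}{1-|z_0|^2}=\frac{|z_0|^2-|\omega(z_0)|^2}{|z_0|(1-|z_0|^2)},
\]
which is exactly the claimed inequality.

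For the equality statement, equality in Schwarz--Pick at a single point holds if and only if $\varphi$ is a disk automorphism, $\varphi(z)=e^{i\alpha}(z-a)/(1-\bar a z)$ with $a\in\mathbb{D}$. In the degenerate case $\varphi$ is a unimodular constant and equality also holds. So equality occurs exactly when $\omega(z)=z\varphi(z)$ is a Blaschke product of degree $2$ fixing $0$, where the degenerate rotation case is counted as a degenerate Blaschke product. Conversely, every degree-$2$ Blaschke product fixing $0$ has this form, so it attains equality.

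To check that the bound describes the whole region of variability, I would fix $w_0=\omega(z_0)$ and let $\varphi$ range over $\mathcal{B}$ with $\varphi(z_0)=w_0/z_0$. The set of values of $\varphi'(z_0)$ is the closed disk given by Schwarz--Pick. This follows by composing with automorphisms, which reduces to the case $z_0=0=\varphi(z_0)$, where $\varphi'(0)$ fills the closed unit disk.

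The only delicate step is the equality and degenerate case bookkeeping; the inequality itself is routine.
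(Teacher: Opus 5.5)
The paper does not prove this lemma; it only cites Dieudonn\'e and Duren. Your argument is correct and is the standard proof from those sources: you apply the Schwarz--Pick lemma to $\varphi(z)=\omega(z)/z$ via the identity $\omega'(z_0)-\omega(z_0)/z_0=z_0\varphi'(z_0)$, and you show attainability by composing with automorphisms. You are also right that the rotation $\omega(z)=e^{i\alpha}z$ gives trivial equality (both sides vanish), so the ``only if'' as stated in the paper holds only if rotations are counted as degenerate degree-$2$ Blaschke products.
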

\begin{lem}\cite{DP2008}\label{lem2} Suppose $f$ is analytic in $\mathbb{D}$ with $|f(z)|\leq1$, then we have
\beas \frac{\left|f^{(n)}(z)\right|}{n!}\leq \frac{1-|f(z)|^2}{(1-|z|)^{n-1}(1-|z|^2)}\quad\text{and}\quad |a_n|\leq 1-|a_0|^2\quad\text{for}\quad n\geq 1,~~|z|<1.\eeas\end{lem}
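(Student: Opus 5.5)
We prove both estimates of Lemma \ref{lem2} with the Schwarz--Pick lemma, applied at every point of $\mathbb{D}$ through a disk automorphism, together with Cauchy's estimates on circles contained in $\mathbb{D}$. Write $f(z)=\sum_{n\ge0}a_nz^n$. We may assume $|f|<1$ in $\mathbb{D}$: if $|f(z_1)|=1$ at some point, then $f$ is a unimodular constant by the maximum principle, and both sides of each inequality vanish.

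\emph{Coefficient bound.} Put $\phi_a(w)=(w-a)/(1-\overline{a}w)$ with $a=a_0=f(0)$, and set $F=\phi_{a_0}\circ f$. Then $F$ maps $\mathbb{D}$ into $\mathbb{D}$ and $F(0)=0$. First we handle $n=1$: we have $F'(0)=a_1/(1-|a_0|^2)$, so Schwarz's lemma gives $|a_1|\le 1-|a_0|^2$. For $n\ge2$, fix $\eta=e^{2\pi i/n}$ and average over rotations:
\[
g(z)=\frac1n\sum_{k=0}^{n-1}f(\eta^kz)=\sum_{m\ge0}a_{mn}z^{mn}.
\]
Here $|g|\le1$ and $g=G(z^n)$ with $G(\zeta)=a_0+a_n\zeta+\cdots$. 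The function $G$ is analytic in $\mathbb{D}$ and bounded by $1$, since $z\mapsto z^n$ maps $\mathbb{D}$ onto $\mathbb{D}$. Applying the $n=1$ case to $G$ gives $|a_n|\le1-|a_0|^2$.

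\emph{Derivative bound.} Fix $z\in\mathbb{D}$ and write $r=|z|$. Consider $h(\zeta)=f(z+\rho\zeta)$ with $\rho=1-r$. Then $h$ is analytic in $\mathbb{D}$, satisfies $|h|\le1$, and has $h(0)=f(z)$ and $h^{(n)}(0)=\rho^nf^{(n)}(z)$. The coefficient bound just proved, applied to $h$, yields
\[
\frac{|f^{(n)}(z)|}{n!}\le\frac{1-|f(z)|^2}{(1-r)^n}.
\]
This falls short of the claimed right side $(1-|f(z)|^2)/((1-r)^{n-1}(1-r^2))$ by the factor $(1+r)$, because $(1-r)^n$ must be replaced by $(1-r)^{n-1}(1-r^2)$. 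To obtain the sharper form, we instead compose with the automorphism $\psi(\zeta)=(\zeta+z)/(1+\overline{z}\zeta)$ and set $H=f\circ\psi$. Then $H(0)=f(z)$, and the coefficient bound gives $|H^{(k)}(0)|/k!\le 1-|f(z)|^2$ for every $k\ge1$.

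Now recover $f^{(n)}(z)$ from the Taylor coefficients of $H$ via $f=H\circ\psi^{-1}$, where $\psi^{-1}(w)=(w-z)/(1-\overline{z}w)$. Writing $w=z+t$, we have
\[
\psi^{-1}(z+t)=\frac{t}{1-r^2}\cdot\frac{1}{1-\frac{\overline{z}t}{1-r^2}},
\]
so $\psi^{-1}(z+t)$ is majorized coefficientwise by $u/(1-ru)$ with $u=t/(1-r^2)$. Substituting the coefficient bounds of $H$, the $t^n$ coefficient of $f(z+t)$ is dominated by $(1-|f(z)|^2)$ times the $t^n$ coefficient of
\[
\sum_{k\ge1}\left(\frac{u}{1-ru}\right)^k=\frac{u}{1-(1+r)u}.
\]
That coefficient equals $(1+r)^{n-1}/(1-r^2)^n=1/\bigl((1-r)^{n-1}(1-r^2)\bigr)$, which is exactly the claimed bound.

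The main obstacle is this last majorant computation, and specifically obtaining the factor $(1-r)^{n-1}(1-r^2)$ rather than the weaker $(1-r)^n$. It is the composition with $\psi^{-1}$, with the geometric-series majorant summed exactly, that delivers the sharp form. The steps to verify carefully are the coefficientwise domination and the identity $\frac{u/(1-ru)}{1-u/(1-ru)}=\frac{u}{1-(1+r)u}$.
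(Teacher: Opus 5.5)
The paper gives no proof of this lemma; it is quoted from \cite{DP2008}, so there is no internal argument to compare against. Your proof is correct, and it is the standard one for this estimate; as far as I recall, it is essentially the route of the cited source. It has three ingredients. First, the coefficient bound $|a_n|\le 1-|a_0|^2$, from Schwarz's lemma plus rotation averaging. Second, recentring at $z$ by a disk automorphism. Third, the majorant summation, which is just the identity $\sum_{k=1}^{n}\binom{n-1}{k-1}r^{n-k}=(1+r)^{n-1}$, because the $t^n$ coefficient of $\bigl(u/(1-ru)\bigr)^k$ is $\binom{n-1}{k-1}r^{n-k}/(1-r^2)^n$.

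Two small remarks. The detour through $h(\zeta)=f(z+\rho\zeta)$, and the announced use of Cauchy's estimates, play no role in the final argument and can be dropped. Reading off the $t^n$ coefficient of $H(\psi^{-1}(z+t))$ termwise deserves one line of justification. For $|t|<1-r$ one has $|u|/(1-r|u|)<1$, so the double series is dominated by $\sum_k|b_k|\bigl(|u|/(1-r|u|)\bigr)^k<\infty$ and may be rearranged.
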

The Dieudonn\'e's lemma is an improvement of the Schwarz's lemma as well as the Schwarz-Pick lemma. A Blaschke product of degree $n\in\mathbb{N}$ is
 of the form 
 \beas f(z)=e^{i\theta}\prod_{j=1}^n \frac{z-z_j}{1-\ol{z_j}z},\quad z, z_j\in\mathbb{D}, ~ \theta\in\mathbb{R}. \eeas
\indent One of the most important and useful tools in geometric function theory is the differential subordination technique. With the help of differential subordination, many problems in geometric function theory can be solved in a simple and precise manner.
An analytic function $f$ in the unit disk $\mathbb{D}$ is said to be subordinate to an analytic function $g$ in $\mathbb{D}$, written as $f\prec g$, if there exists a function $\omega\in\mathcal{B}_0$ such that $f(z)=g(\omega(z))$ for $z\in\mathbb{D}$. Moreover, if $g$ is univalent in $\mathbb{D}$,
then $f \prec g$ if, and only if, $f(0)=g(0)$ and $f(\mathbb{D})\subseteq g(\mathbb{D})$. For basic details and results on subordination classes, we refer
to \cite[Chapter 6]{D1983}. 
Using the notion of subordination, Ma and Minda \cite{MM1992} have introduced more general subclasses of starlike and convex functions as follows:
\beas\mathcal{S}^*(\varphi)=\left\{f\in\mathcal{S}:\frac{zf'(z)}{f(z)}\prec\varphi(z)\right\}\quad\text{and}\quad\mathcal{C}(\varphi)=\left\{f\in\mathcal{S}:1+\frac{zf''(z)}{f'(z)}\prec\varphi(z)\right\},\eeas
where the function $\varphi :\mathbb{D}\to\mathbb{ C}$, called Ma-Minda function, is analytic and univalent in $\mathbb{D}$ such that $\varphi(\mathbb{D})$ has positive real 
part, symmetric with respect to the real axis, starlike with respect to $\varphi(0)=1$ and $\varphi'(0)>0$. A Ma-Minda function $\varphi(z)$ has the Taylor series expansion 
of the form $\varphi(z)=1+\sum_{n=1}^\infty a_nz^n $ $(a_1>0)$.
We call $\mathcal{S}^*(\varphi)$ and $\mathcal{C}(\varphi)$ the Ma-Minda type starlike and Ma-Minda type convex classes associated with $\varphi$, respectively. One 
can easily prove the inclusion 
relations $\mathcal{S}^*(\varphi)\subset\mathcal{S}^*$ and $\mathcal{C}(\varphi)\subset\mathcal{C}$. It is known that $f\in\mathcal{C}(\varphi)$ if, and only if, $zf'\in\mathcal{S}^*(\varphi)$.\\[2mm]
\indent For different choices of \(\phi\), the classes \(\mathcal{S}^{*}(\phi)\) and \(\mathcal{C}(\phi)\) generate several important subclasses, including the Janowski classes and strongly starlike and convex classes (see \cite{S1966, J1973}). In 2025, Kumar \cite{K2025} considered the following class
\beas S_\theta(\gamma)=\left\{f\in\mathcal{A}: \textrm{Re}\left(e^{i\theta}\frac{zf'(z)}{f(z)}\right)<\left(1+\frac{\gamma}{2}\right)\cos\theta,~ z\in\mathbb{D},~ \gamma>0~\text{and}~|\theta|<\frac{\pi}{2}\right\}.\eeas
It is evident that the function $f(z)=z$ belongs to the class $S_\theta(\gamma)$, and thus the class $S_\theta(\gamma)$ is non-empty. In 1973, Shah \cite{S1973} introduced the class $S_\theta(\gamma)$ for  $\theta=0$ and $\gamma=1$. Furthermore, Shah \cite{S1973} has proved that $S_0(1)$ is the class of starlike functions in one direction. In this paper, we consider a new class $\mathcal{C}_\theta(\gamma)$ defined as follows:
  \beas \mathcal{C}_\theta(\gamma)= \left\{f\in\mathcal{A}: \textrm{Re}\,\left(e^{i\theta}\left(1+\frac{zf''(z)}{f'(z)}\right)\right)<\left(1+\frac{\gamma}{2}\right)\cos\theta,~ z\in\mathbb{D},~ \gamma>0, ~|\theta|<\frac{\pi}{2}\right\}.\eeas 
The class $\mathcal{C}_{\theta}(\gamma)$ imposes that the function $p(z)=1+zf''(z)/f'(z)$ lies in a half-plane rotated by $\theta$, with $\gamma$ determining its distance from the origin. Two special cases are immediate:
\begin{itemize}
\item[(i)] For $\theta=0$, the class $\mathcal{C}_{0}(\gamma)$ coincides with the classical condition $\textrm{Re}\,(p(z)) < 1+\gamma/2$ studied by Ozaki 
\cite{O1941} for $\gamma=1$ and later investigated by Umezawa \cite{U1952}, who showed such functions are convex in one direction.
\item[(ii)] For $\theta \neq 0$, the half-plane $\textrm{Re}\,\left(e^{i\theta}p(z)\right) < (1+\gamma/2)\cos\theta$ is not symmetric about the real axis, placing the class outside the Ma--Minda framework.\end{itemize}
The parameters $\theta$ and $\gamma$ thus provide a controlled setting for studying how rotational asymmetry and domain translation affect Schwarzian‑derivative estimates.
In terms of subordination, the class $C_\theta(\gamma)$  can be defined as follows:
\bea\label{a1} f\in \mathcal{C}_\theta(\gamma)\Longleftrightarrow 1+\frac{zf''(z)}{f'(z)}\prec \frac{1-e^{-i\theta}\left(\gamma\cos \theta+e^{i\theta}\right)z}{1-z}:=g(z).\eea
\begin{figure}[H]
\begin{minipage}[c]{0.5\linewidth}
\centering
\includegraphics[scale=0.6]{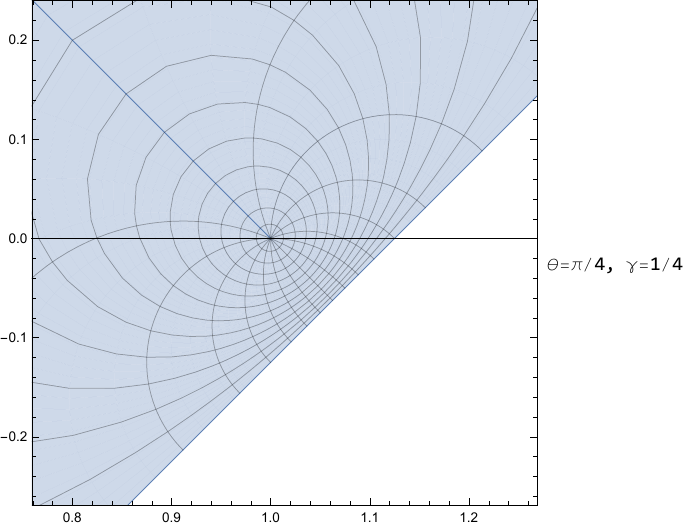}
\end{minipage}
\begin{minipage}[c]{0.49\linewidth}
\centering
\includegraphics[scale=0.6]{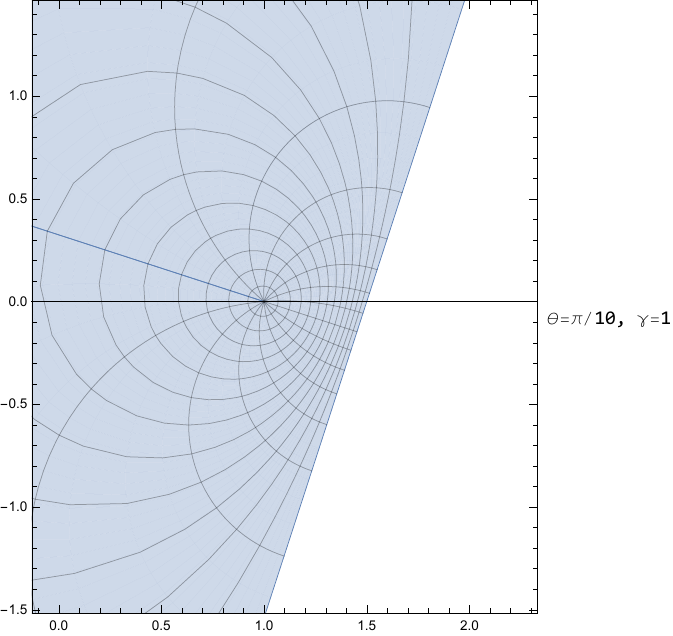}
\end{minipage}
\begin{minipage}[c]{0.49\linewidth}
\centering
\includegraphics[scale=0.6]{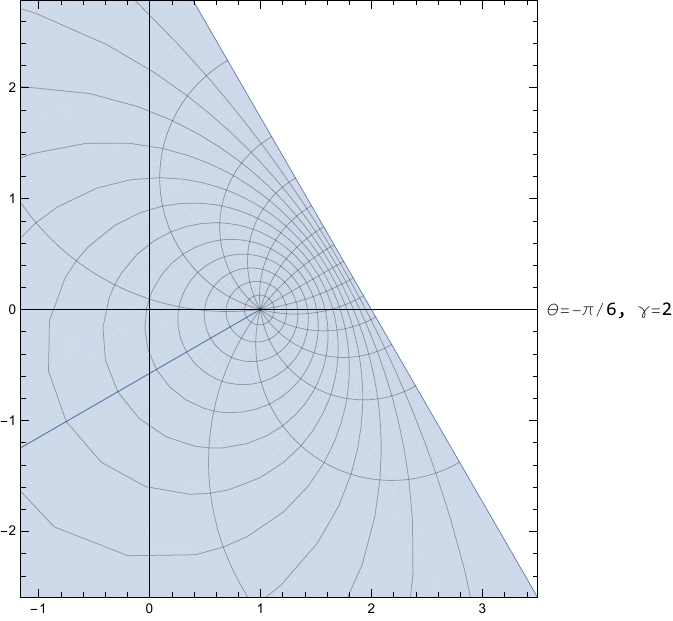}
\end{minipage}
\begin{minipage}[c]{0.49\linewidth}
\centering
\includegraphics[scale=0.6]{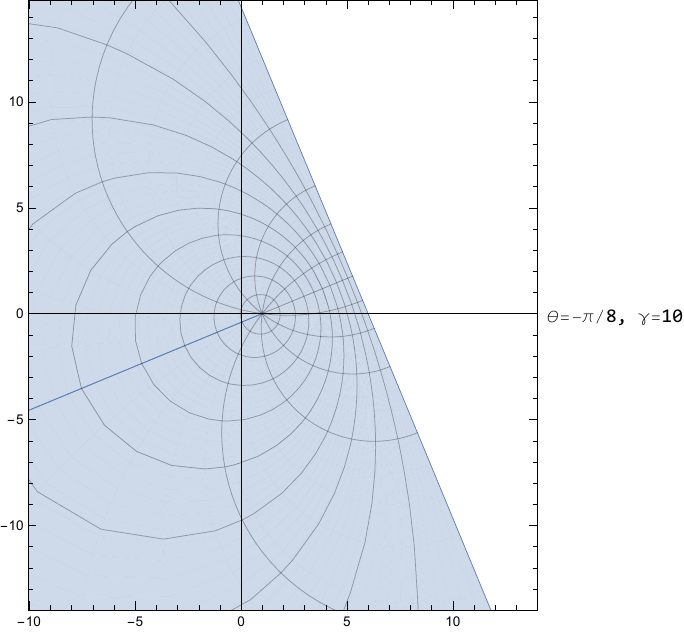}
\end{minipage}
\caption{Image of $\mathbb{D}$ under the mapping $g(z)$ for different values of $\theta$ and $\gamma$}
\label{Fig1}
\end{figure}
\noindent For $\theta=0$, the class $C_\theta(\gamma)$ reduces to the following class 
\beas \mathcal{G}(\gamma):=\left\{f\in\mathcal{A}: \textrm{Re}\left(1+zf''(z)/f'(z)\right)<1+\frac{\gamma}{2},~ z\in\mathbb{D},~ \gamma>0\right\}.\eeas
In 1941, Ozaki \cite{O1941} introduced the class $\mathcal{G} := \mathcal{G}(1)$ and showed that the functions in the class $\mathcal{G}$ are univalent in $\mathbb{D}$. 
Furthermore, functions in the class $\mathcal{G}$ are starlike in the unit disk $\mathbb{D}$ (see \cite{JO1995, PR1995}). Thus, the class $\mathcal{G}(\gamma)$ is 
included in $\mathcal{S}^*$ whenever $\gamma\in (0,1]$. One can easily show that functions in $\mathcal{G}(\gamma)$ are not univalent in $\mathbb{D}$ for $\gamma> 1$. For $0 <\gamma\leq 2/3$,
 the class $\mathcal{G}(\gamma)$ was studied by Uralegaddi {\it et al.} \cite{UGS1994}. 
In Figure \ref{Fig1}, we obtain the image of the unit disk $\mathbb{D}$ under the function $g(z)=\left(1-e^{-i\theta}\left(\gamma\cos \theta+e^{i\theta}\right)z\right)/(1-z)$ for different values of $\theta$ and $\gamma$ satisfying $|\theta|<\pi/2$ and $\gamma>0$.
\noindent Thus, it is easy to conclude that for $\theta\not=0$, the class $\mathcal{C}_\theta(\gamma)$ is a non Ma-Minda type of convex class associated with $g(z)=\left(1-e^{-i\theta}\left(\gamma\cos \theta+e^{i\theta}\right)z\right)/(1-z)$.\\[2mm]
\indent Let $f = u+i v$ be a complex-valued function of $z =x +i y$ in a simply connected domain $\Omega$. If $f\in C^2(\Omega)$ (continuous first and second order partial derivatives in 
 $\Omega$ exist) and satisfies the Laplace equation $\Delta f = 4 f_{z\ol{z}}=0$ in $\Omega$, then $f$ is said to be harmonic in $\Omega$, where 
$f_z=(1/2)(\pa f/\pa x -i \pa f/\pa y)$ and $f_{\ol{z}}=(1/2)(\pa f/\pa x +i \pa f/\pa y)$. Note that every harmonic
 mapping $f$ has the canonical representation $f =h +\ol{g}$, where $h$ and $g$ are analytic in $\Omega$, known respectively as the analytic and co-analytic parts of $f$. 
 This representation is unique up to an additive constant (see \cite{D2004}). The inverse function theorem and a result of Lewy \cite{L1936} shows that a harmonic
 function $f$ is locally univalent in $\Omega$ if, and only if, $J_f(z)\not= 0$ in $\Omega$. A harmonic mapping $f$ is locally univalent
 and sense-preserving in $\Omega$ if, and only if, $J_f(z)> 0$ in $\Omega$. Note that $|f_z|\not=0$ whenever $J_f>0$. Let $\mathcal{H}$ denote the class of all harmonic mappings $f=h+\ol{g}$ in $\mathbb{D}$ such that $h(0)=h'(0)-1=g(0)=0$. Thus, every $f\in\mathcal{H}$ has the following representation:
 \beas f(z)=z+\sum_{n=2}^\infty a_n z^n+\ol{\sum_{n=1}^\infty b_n z^n},\quad z\in\mathbb{D}.\eeas
 Let $\mathcal{S}_\mathcal{H}$ denote the subclass of $\mathcal{H}$ that are sense-preserving and univalent in the unit disk $\mathbb{D}$ and further
 let $S^0_{\mathcal{H}}=\{f =h+\ol{g}\in\mathcal{S}_{\mathcal{H}} : g'(0)=0\}$. We refer to \cite{D2004} for an in-depth study on planar harmonic univalent mappings.\\[2mm]
\indent In this paper, we now introduce a new class $\mathcal{HC}_\theta(\gamma)$ of harmonic mappings $f=h+\ol{g}$ in $\mathcal{H}$ such that
\beas \textrm{Re}\left(e^{i\theta}\left(1+\frac{z h''(z)}{h'(z)}\right)\right)<\left(1+\frac{\gamma}{2}\right)\cos\theta\quad \text{for}~ z\in\mathbb{D},~ \gamma>0, ~|\theta|<\frac{\pi}{2}\eeas
and the dilatation $\omega=g'/h'$ of $f$ belongs to $\text{Aut}(\mathbb{D})$, where $\text{Aut}(\mathbb{D})$ denotes the collection of all automorphisms of $\mathbb{D}$, {\it i.e.,} 
\beas \text{Aut}(\mathbb{D})=\left\{e^{i\theta}\frac{z-\alpha}{1-\ol{\alpha}z}: \alpha\in\mathbb{D},~\theta\in\mathbb{R}\right\}.\eeas
\subsection{Pre-Schwarzian and Schwarzian derivatives of analytic functions}
For a locally univalent analytic function $f$ defined in a simply connected domain $\Omega\subset \mathbb{C}$, the pre-Schwarzian derivative $P_f$ and the Schwarzian derivative $S_f$ of $f$ are, respectively, defined as follows:
\bea\label{c2} P_f(z)=\frac{f''(z)}{f'(z)}\quad\text{and}\quad S_f(z) = P_f'(z)-\frac{1}{2}P_f^2(z)=\frac{f'''(z)}{f''(z)}-\frac{3}{2}\left(\frac{f''(z)}{f'(z)}\right)^2.\eea
Moreover, the pre-Schwarzian and the Schwarzian norms of $f$ are, respectively, given by
\bea\label{r1} \Vert P_f\Vert= \sup_{z \in \mathbb{D}}(1-|z|^2)|P_f(z)|\quad\text{and}\quad \Vert S_f\Vert = \sup_{z \in \mathbb{D}}(1-|z|^2)^2|S_f(z)|.\eea
Some important global univalence criteria for a locally univalent analytic function have been obtained using the pre-Schwarzian and Schwarzian norms.
For a univalent analytic function $f$ in $\mathbb{D}$, it is well-known that $\Vert P_f\Vert\leq 6$ and the equality 
is attained for the Koebe function or its rotation. 
One of the most used univalence criterion for locally univalent analytic functions is the Becker's univalence criterion \cite{B1972}, 
which states that if $f$ is a locally univalent analytic function and $\sup_{z\in\mathbb{D}}\left(1-|z|^2\right) \left|zP_f(z)\right|\leq1$, then $f$ is univalent in $\mathbb{D}$. In a subsequent study, 
Becker and Pommerenke \cite{BP1984} have proved that the constant $1$ is sharp.
In 1976, Yamashita \cite{Y1976} 
proved that $\Vert P_f \Vert$ is finite if, and only if, $f$ is uniformly locally univalent in $\mathbb{D}$. Moreover, if $\Vert P_f\Vert<2$, then $f$ is bounded in $\mathbb{D}$ (see \cite{KS2002}).\\[2mm] 
\indent In terms of the Schwarzian derivative, it is well-known that for any univalent analytic function $f$ in $\mathbb{D}$, we have the sharp inequality $\Vert S_f\Vert \leq 6$ and
the equality is attained for the Koebe function or its rotation (see \cite{K1932}). 
In 1949, Nehari \cite{N1949} established important criteria for global univalence, expressed in terms of the Schwarzian derivative, by virtue of the connection with linear differential equations.
For instance, if $f$ is locally univalent and analytic in the unit disk $\mathbb{D}$ and satisfies $\Vert S_f\Vert \leq2$, then $f$ is univalent in $\mathbb{D}$. The bound $2$ is sharp \cite{H1949}. This result is known as Nehari's criterion for univalence.\\[2mm]
\indent In the field of univalent function theory, several researchers have studied the pre-Schwarzian norm for various subclasses of analytic and univalent functions.
In 1998, Sugawa \cite{S1998} established the sharp estimate of the pre-Schwarzian norm for functions in the class of strongly starlike functions of order $\alpha$ ($0<\alpha\leq 1$).
In $1999$, Yamashita \cite{Y1999} proved that $\Vert P_f\Vert \leq 6-4\alpha$ for $f\in\mathcal{S}^*(\alpha)$ and $\Vert P_f\Vert \leq 4(1-\alpha)$ for $f\in\mathcal{C}(\alpha)$, 
where $0\leq \alpha<1$ and both the estimates are sharp. In $2000$, Okuyama \cite{O2000} established the sharp estimate of the pre-Schwarzian norm for $\alpha$-spirallike functions. Kim and Sugawa \cite{KS2006} established the sharp 
estimate of the pre-Schwarzian norm $\Vert P_f\Vert \leq 2(A-B)/(1+\sqrt{1-B^2})$ for $f\in\mathcal{C}[A,B]$ (see also \cite{PS2008}). Ponnusamy and Sahoo \cite{PS2010} 
have obtained the sharp estimates of the pre-Schwarzian norm for functions in the class
$\mathcal{S}^*[\alpha,\beta]:=S^*\left(\left((1+(1-2\beta)z)/(1-z)\right)^\alpha\right)$, where $0<\alpha\leq 1$ and $0\leq \beta<1$. In $2014$, Aghalary and Orouji \cite{AO2014} obtained the sharp estimate of the pre-Schwarzian norm for $\alpha$-spirallike function of order $\rho$, where $\alpha\in(-\pi/2,\pi/2)$ and $\rho\in[0,1)$. The pre-Schwarzian norm of certain integral transform of $f$ for certain subclass of $f$ has been also studied in the literature. For a detailed study on pre-Schwarzian norm, we refer to \cite{KPS2004,PPS2008,PS2008, AP2023,1AP2023,2AP2023,AP2024,CKPS2005, ABM2025, B2025} and the references therein.
\subsection{Pre-Schwarzian and Schwarzian norms of harmonic mappings}
\indent For a locally univalent harmonic mapping $f=h+\overline{g}$ in the unit disk $\mathbb{D}$, Hern\'andez and Mart\'in \cite{HM2015} have defined the pre-Schwarzian and Schwarzian derivatives, respectively, as follows:
\beas
P_f &=& \left(\log(J_f)\right)_z = \frac{h''}{h'}-\frac{\overline{\omega}\omega'}{1-|\omega|^2}\quad \text{and}\hfill\\[2mm]
S_f &=& \left(\log J_f\right)_{zz}-\frac{1}{2}\left(\log J_f\right)_z^2= S_h+\frac{\overline{\omega}}{1-|\omega|^2}\left(\frac{h''}{h'}\omega'-\omega''\right)-\frac{3}{2}\left(\frac{\omega'\overline{\omega}}{1-|\omega|^2}\right)^2,\eeas
where $S_h$ is the classical Schwarzian derivative of the analytic function $h$, $J_f$ is the Jacobian and $\omega=g'/h'$ is the second complex dilatation of $f$. These derivatives play a crucial role in understanding the behavior of harmonic functions, particularly in the context of conformal mappings and their geometric properties. 
This notion of pre-Schwarzian and Schwarzian derivatives of harmonic functions is a generalization of the classical pre-Schwarzian and Schwarzian derivatives of analytic functions. 
Note that when $f$ is analytic, we have $\omega=0.$ It is also easy to see that $S_f = (P_f)_z-(1/2)(P_f)^2.$ As in the case of analytic functions, for a sense-preserving 
locally univalent harmonic mapping $f = h+\bar{g}$ in the unit disk $\mathbb{D}$, the pre-Schwarzian norm $\Vert P_f\Vert$ and the Schwarzian norm $\Vert S_f\Vert$ are defined by (\ref{r1}). For a comprehensive study on the pre-Schwarzian and Schwarzian derivatives for harmonic mappings, we refer to \cite{HM2015, LP2018}.\\[2mm]
\indent Hern\'andez and Mart\'in \cite{1HM2015} have established a sufficient condition for a locally univalent harmonic mapping $f$ in the unit disk $\mathbb{D}$ to be univalent 
in $\mathbb{D}$. Moreover, they proved that if $f$ is a sense‑preserving harmonic mapping in $\mathbb{D}$ with $\Vert S_f\Vert \leq \delta t$ for some $t<1$ and its dilatation 
$\omega_f$ satisfies $\sup_{z\in\mathbb{D}} |\omega_f(z)| < 1$,  then $f$ can be extended to a quasiconformal mapping in the Riemann sphere $\mathbb{C}\cup\{\infty\}$. The sharp value of $\delta$ remains an unresolved problem.
The key results connecting the Schwarzian derivative and quasiconformal mappings are given in the following theorem.
\begin{theoA} \cite[Chapter II]{L1987}\cite[Theorem 3.2]{O1998} If $f$ extends to a $k$-quasiconformal $(0\leq k <1)$ mapping of the Riemann sphere $\mathbb{C}\cup\{\infty\}$, then $\Vert S_f\Vert\leq 6k$. Conversely, if $\Vert S_f\Vert\leq 2k$, then $f$ extends to a $K$-quasiconformal mapping of the Riemann sphere $\mathbb{C}\cup\{\infty\}$, where $K=(1+k)/(1-k)$.
\end{theoA}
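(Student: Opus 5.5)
The plan is to treat the two halves of Theorem A separately. The first half is a holomorphic-family argument combined with the Kraus bound $\Vert S_f\Vert\le 6$ for univalent $f$. The second half is the explicit Ahlfors--Weill reflection. No result specific to this paper is needed; only the facts about $\Vert S_f\Vert$ quoted in the introduction are used.

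\emph{Necessity ($\Vert S_f\Vert\le 6k$).} Let $F$ be the $k$-quasiconformal extension of $f$ and $\mu$ its Beltrami coefficient. Then $\mu=0$ a.e.\ on $\mathbb{D}$ and $|\mu|\le k$ a.e.\ on $\mathbb{C}\setminus\mathbb{D}$. If $k=0$, $F$ is M\"obius and $S_f\equiv 0$, so assume $k>0$.
\begin{itemize}
\item For $t\in\mathbb{D}$, let $F_t$ be the normalized solution of the Beltrami equation with coefficient $t\mu/k$. This coefficient has sup norm $\le|t|<1$, so the measurable Riemann mapping theorem applies.
\item The restriction $f_t=F_t|_{\mathbb{D}}$ is conformal and univalent, and $f_k$ differs from $f$ by a M\"obius map. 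Hence $S_{f_k}=S_f$, while $f_0$ is M\"obius and $S_{f_0}=0$.
\item By the Ahlfors--Bers theorem, $F_t(z)$ and its $z$-derivatives on $\mathbb{D}$ depend holomorphically on $t$. So for fixed $z\in\mathbb{D}$ the function $\Phi_z(t)=(1-|z|^2)^2S_{f_t}(z)$ is holomorphic in $t\in\mathbb{D}$.
\item Since each $f_t$ is univalent, Kraus's theorem gives $|\Phi_z(t)|\le 6$, and $\Phi_z(0)=0$.
\item The Schwarz lemma then yields $|\Phi_z(t)|\le 6|t|$. Taking $t=k$ and the supremum over $z$ gives $\Vert S_f\Vert\le 6k$.
\end{itemize}

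\emph{Sufficiency (Ahlfors--Weill).} Assume $\Vert S_f\Vert\le 2k<2$, so $f$ is univalent by Nehari's criterion. Set
\[
F(z)=f(z)\ \ (z\in\overline{\mathbb{D}}),\qquad F(1/\bar z)=f(z)+\frac{(1-|z|^2)f'(z)}{\bar z-\tfrac12(1-|z|^2)f''(z)/f'(z)}\ \ (z\in\mathbb{D}).
\]
Equivalently, write $f=y_1/y_2$ with $y_j''+\tfrac12 S_f\,y_j=0$ and use the standard reflection built from $y_1,y_2$; both descriptions give the same $F$.
\begin{itemize}
\item A direct computation of $\partial_{\bar\zeta}F/\partial_\zeta F$ at $\zeta=1/\bar z$ gives a Beltrami coefficient of modulus $\tfrac12(1-|z|^2)^2|S_f(z)|\le k$.
\item Hence $F$ is sense-preserving and locally quasiconformal off $\partial\mathbb{D}$, with dilatation bounded by $k$, i.e.\ $K=(1+k)/(1-k)$.
\item Continuity across $\partial\mathbb{D}$ holds because the correction term vanishes as $|z|\to1$.
\end{itemize}

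The main obstacle is proving that $F$ is a global homeomorphism of the sphere. I would first replace $f$ by $f_r(z)=f(rz)$, which is analytic on $\overline{\mathbb{D}}$ and satisfies $\Vert S_{f_r}\Vert\le\Vert S_f\Vert$. For $f_r$ the extension $F_r$ is a local homeomorphism of the sphere, continuous up to the circle. It is proper, so it is a covering map of the simply connected sphere and hence a homeomorphism. Letting $r\to1$ and using compactness of normalized $K$-quasiconformal families then gives the $K$-quasiconformal extension of $f$.
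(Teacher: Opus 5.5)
The paper does not prove Theorem A; it only cites Lehto and Osgood. Your outline is the classical route found there: the K\"uhnau--Lehto holomorphic-family argument for necessity, and the Ahlfors--Weill reflection (first for $f_r$, then a compactness limit) for sufficiency. The necessity half is correct as written. In the sufficiency half, your extension formula is correct: it gives the identity for $f(z)=z$ and reproduces $f$ when $f$ is M\"obius. The modulus $\tfrac12(1-|z|^2)^2|S_f(z)|$ of its Beltrami coefficient is also correct.

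\textbf{The gap is in the global step.} You assert that $F_r$ is a local homeomorphism of the sphere. Your bullets only show that it is a local homeomorphism off $\partial\mathbb{D}$ and continuous across $\partial\mathbb{D}$. That does not give local injectivity at points of the circle. A map can be a local homeomorphism on each side of a curve, continuous across it, and still fold along it, as $(x,y)\mapsto(x,|y|)$ does. Excluding this folding of the exterior sheet back over $f_r(\mathbb{D})$ is the substance of the theorem. The covering argument needs it as an input; it does not produce it.

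The missing step is a short computation. Since $f_r$ is analytic and locally univalent on $|z|<1/r$, put $y_2=(f_r')^{-1/2}$ and $y_1=f_ry_2$ there. Then $y_j''+\tfrac12 S_{f_r}y_j=0$ and $y_1'y_2-y_1y_2'=1$. Set $D(z)=\bar z\,y_2(z)+(1-|z|^2)y_2'(z)$. Then $F_r(1/\bar z)=\bigl(\bar z\,y_1(z)+(1-|z|^2)y_1'(z)\bigr)/D(z)$, and for $\zeta=1/\bar z$,
\[
\partial_\zeta F_r(\zeta)=\frac{1}{\zeta^{2}D(z)^{2}},\qquad
\partial_{\bar\zeta}F_r(\zeta)=-\frac{(1-|z|^2)^2\,S_{f_r}(z)}{2\,\bar\zeta^{\,2}D(z)^{2}}.
\]
These formulas give your Beltrami coefficient. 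They also show $\partial_\zeta F_r\neq0$, which is what makes $\mu$ well defined and the Jacobian positive; your sense-preservation claim silently uses this.

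Now let $\zeta\to z_0\in\partial\mathbb{D}$ from outside. Then $D\to\bar z_0\,y_2(z_0)$, so $\partial_\zeta F_r\to 1/y_2(z_0)^2=f_r'(z_0)$ and $\partial_{\bar\zeta}F_r\to0$. These match the derivatives of $f_r$ from inside. Hence $F_r$ is $C^1$ across the circle, with Jacobian $|f_r'(z_0)|^2>0$ there, and the inverse function theorem gives the local homeomorphism property. At $\zeta=\infty$ and at a zero of $D$, the same formulas work in charts. With this inserted, your covering argument is valid.

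Two smaller points. First, your reason for continuity (the correction term tends to $0$) holds for $f_r$ but not for general $f$, so it belongs after the reduction to $f_r$. For instance, $f(z)=((1+z)/(1-z))^{\alpha}$ with $0<\alpha<1$ has $\Vert S_f\Vert=2(1-\alpha^2)<2$. Yet at $z=x\in(0,1)$ the correction term equals exactly $-2\bigl((1+x)/(1-x)\bigr)^{\alpha}$, which is unbounded. Second, the normalization you need for the compactness step comes for free. Both $F_r(0)=f(0)$ and $F_r(\infty)=f(0)-2f'(0)^2/f''(0)$ (read as $\infty$ if $f''(0)=0$) are independent of $r$, and $F_r(1/2)=f(r/2)\to f(1/2)$.
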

In $2015$, Hern\'andez and Mart\'in \cite[Theorem 5]{HM2015} demonstrated the existence of a positive constant $C$ such that $\Vert S_f\Vert \leq C$ for all univalent harmonic 
mappings $f$ in the unit disk $\mathbb{D}$.  
In $2016$, Graf \cite[Theorem 3]{G2016} proved that $\| S_f\|$ is bounded for locally univalent harmonic mappings in the affine and linear invariant family.  
However, the sharp constants in these bounds are still unknown.
In 2016, Graf \cite[Theorem 3]{G2016} proved that $\Vert S_f\Vert$ is bounded for locally univalent harmonic mappings in the affine and linear invariant family.
In 2015, Hern\'andez and Mart\'in \cite[Theorem 5]{HM2015} have demonstrated that there exists a positive constant $C$ such that $\Vert S_f\Vert\leq C$ for all univalent
harmonic mappings $f$ in the unit disk $\Bbb{D}$. But inquiries regarding the sharp bound remain unresolved. Hern\'andez and Mart\'in \cite{HM2015} have proved the pre-Schwarzian norm $\Vert P_f\Vert \leq 5$ and Schwarzian norm $\Vert S_f\Vert\leq 6$ of convex harmonic functions. Here, the bound $6$ is non-sharp, while the bound $5$ is sharp.\\[2mm]
\noindent In this paper, we establish sharp estimates of the Schwarzian derivative and Schwarzian norm for functions in the class $\mathcal{C}_\theta(\gamma)$. 
Using pre-Schwarzian and Schwarzian norms, we determine under which conditions the functions in the class $\mathcal{C}_\theta(\gamma)$ are univalent in the unit disk $\mathbb{D}$. Moreover, we investigate the pre-Schwarzian and Schwarzian norm for functions in the class $\mathcal{HC}_\theta(\gamma)$.
\section{Main results}
In the following result, we establish the sharp estimate of the Schwarzian derivative for functions $f$ in the class $\mathcal{C}_\theta(\gamma)$.
\begin{theo}\label{Th1} For $|\theta|<\pi/2$ and $\gamma>0$, let $f$ be in the class $\mathcal{C}_\theta(\gamma)$. Then the Schwarzian derivative satisfies the following inequality
\beas |S_f| \leq \frac{\gamma\cos\theta\sqrt{4+(\gamma^2+4\gamma)\cos^2\theta}}{2(1-|z|)^2}.\eeas
Moreover, the equality occur for the function $f_1(z)$ defined by 
\beas f_1(z)=\dfrac{1-(1-z)^{\gamma e^{-i\theta}\cos\theta+1}}{\gamma e^{-i\theta}\cos\theta+1}.\eeas
\end{theo}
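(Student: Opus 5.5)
Set $A:=\gamma e^{-i\theta}\cos\theta$, so that $|A|=\gamma\cos\theta$. The plan is to turn the subordination (\ref{a1}) into a Schwarz function representation of $P_f$, compute $S_f$ in terms of that Schwarz function, and bound it with Dieudonn\'e's lemma (Lemma \ref{lem1}).

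\textbf{Step 1: representation of $P_f$ and $S_f$.} By (\ref{a1}) there is $\omega\in\mathcal{B}_0$ with $1+zP_f(z)=g(\omega(z))$. Since $g(\zeta)-1=-A\zeta/(1-\zeta)$, this gives
\beas zP_f(z)=-\frac{A\,\omega(z)}{1-\omega(z)}.\eeas
Write $\omega(z)=z\phi(z)$ with $\phi\in\mathcal{B}$ (Schwarz's lemma). Then
\beas P_f=-\frac{A\phi}{1-z\phi}\quad\text{and}\quad P_f'=-\frac{A\left(\phi'+\phi^2\right)}{(1-z\phi)^2}.\eeas
Substituting into (\ref{c2}) yields
\beas S_f=-\frac{A}{(1-\omega)^2}\left(\phi'+\left(1+\frac{A}{2}\right)\phi^2\right).\eeas
In terms of $\omega$ one has $\phi'=(\omega'-\omega/z)/z$.

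\textbf{Step 2: estimate.} Fix $z\neq0$ with $|z|=r$ and set $w=\omega(z)$. Lemma \ref{lem1} gives
\beas |\phi'(z)|\le\frac{r^2-|w|^2}{r^2(1-r^2)}.\eeas
Hence
\beas |S_f|\le \frac{|A|}{|1-w|^2}\left(\frac{r^2-|w|^2}{r^2(1-r^2)}+\left|1+\frac A2\right|\frac{|w|^2}{r^2}\right).\eeas
The main obstacle is maximizing the right-hand side over $|w|\le r$; it is not monotone in an obvious way. I would first try to show that it is at most
\beas \frac{|A|\,|1+A/2|}{(1-r)^2}.\eeas
Note that
\beas \left|1+\frac A2\right|=\frac12\sqrt{4+(\gamma^2+4\gamma)\cos^2\theta},\eeas
because $|2+A|^2=4+4\gamma\cos^2\theta+\gamma^2\cos^2\theta$. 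This matches the stated constant. Use $|1-w|\ge1-|w|$ and put $s=|w|\in[0,r]$. The required inequality reduces to
\beas \frac{r^2-s^2}{1-r^2}+Ks^2\le \frac{Kr^2(1-s)^2}{(1-r)^2},\qquad K=\left|1+\frac A2\right|.\eeas
Equality holds at $s=r$, so the task is to check that the difference between the two sides is nonpositive on $[0,r]$.

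\emph{Caveat.} At $s=0$ this inequality requires $(1-r)/(1+r)\le Kr^2$, which fails for small $r$. So the bound cannot hold for all $|w|$ at every $r$ with only the factor $(1-|z|)^{-2}$. The stated estimate should then be read with the weaker $(1-|z|)$-type denominators, possibly after combining with Lemma \ref{lem2} applied to $\phi$:
\beas |\phi'(z)|\le \frac{1-|\phi|^2}{1-|z|^2}.\eeas
This gives
\beas |S_f|\le\frac{|A|}{(1-r|\phi|)^2}\left(\frac{1-|\phi|^2}{1-r^2}+K|\phi|^2\right).\eeas
For $t=|\phi|\in[0,1]$, I would compare this with $|A|\,C/(1-r)^2$. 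The expression equals $|A|K/(1-r)^2$ at $t=1$, and otherwise I would check the sign of the derivative in $t$ by routine calculus, using $K\ge1$ (since $\operatorname{Re}A>0$). This calculus step is where I expect the real difficulty to lie.

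\textbf{Step 3: sharpness.} For $f_1$ we have
\beas f_1'=(1-z)^{A},\qquad P_{f_1}=-\frac{A}{1-z},\eeas
which corresponds to $\omega(z)=z$. Then
\beas S_{f_1}=-\frac{A}{(1-z)^2}-\frac{A^2}{2(1-z)^2}=-\frac{A(1+A/2)}{(1-z)^2}.\eeas
Thus $|S_{f_1}(r)|$ equals the bound at $z=r\in(0,1)$, which gives equality.
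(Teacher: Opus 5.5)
\textbf{Verdict: there is a genuine gap.} Your Step 1 representation of $S_f$ and the Dieudonn\'e estimate in Step 2 are exactly the paper's argument: its $\Gamma(\gamma,\theta)$ is your $-A$, and its $|2-\Gamma|/2$ is your $K$. Your sharpness computation with $f_1$ in Step 3 is also correct.

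The problem is the one step that carries the proof: maximizing over $s=|\omega(z)|\in[0,r]$. You never carry it out, and your caveat that it fails for small $r$ comes from an algebra slip. At $s=0$ the left side of your reduced inequality is $r^2/(1-r^2)$, not $1/(1-r^2)$. So the requirement is $r^2/(1-r^2)\le Kr^2/(1-r)^2$, i.e.\ $(1-r)/(1+r)\le K$, which holds because $K>1$.

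Your Schwarz--Pick fallback does not change anything either. Since $|\omega(z)|=r|\phi(z)|$, the bound $|\phi'|\le(1-|\phi|^2)/(1-r^2)$ is identical to Dieudonn\'e's bound $(r^2-|w|^2)/(r^2(1-r^2))$. In that version you again leave the decisive calculus undone.

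The missing argument is short; it is the paper's $\partial F_1/\partial t\ge 0$ step. Put $\Phi(s)=\bigl((r^2-s^2)/(1-r^2)+Ks^2\bigr)/(1-s)^2$, so your target is $\Phi(s)\le\Phi(r)=Kr^2/(1-r)^2$. A direct computation gives
\[
\Phi'(s)=\frac{2\bigl(r^2+(K(1-r^2)-1)s\bigr)}{(1-r^2)(1-s)^3}.
\]
Since $K>1$, we have $K(1-r^2)-1>-r^2$, hence $r^2+(K(1-r^2)-1)s\ge r^2(1-s)>0$ on $[0,r]$. So $\Phi$ is increasing, its maximum is at $s=r$, and $|S_f(z)|\le |A|K/(1-|z|)^2$ follows. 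As written, your proposal does not prove this and instead asserts that the route cannot give the stated bound, so it does not establish the theorem.
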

\begin{proof}
Let $f\in \mathcal{C}_\theta(\gamma)$, where $|\theta|<\pi/2$ and $\gamma>0$. By the definition of the class $\mathcal{C}_\theta(\gamma)$ and from (\ref{a1}), we have
\beas 1+\frac{zf''(z)}{f'(z)}\prec \frac{1-e^{-i\theta}\left(\gamma\cos \theta+e^{i\theta}\right)z}{1-z}.\eeas
Thus, there exist a Schwarz function $\omega\in\mathcal{B}_0$ such that
\beas &&1+\frac{zf''(z)}{f'(z)}=\frac{1-e^{-i\theta}\left(\gamma\cos \theta+e^{i\theta}\right)\omega(z)}{1-\omega(z)},\\[2mm]\text{\it i.e.,}
&&\frac{f''(z)}{f'(z)}=\frac{\left(1-e^{-i\theta}\left(\gamma\cos \theta+e^{i\theta}\right)\right)\omega(z)}{z(1-\omega(z))}.\eeas
For simplicity, let $\Gamma(\gamma, \theta)=-e^{-i\theta}\gamma\cos \theta$.
Therefore, the Schwarzian derivative for the function $f\in S_\theta(\gamma)$ is 
\beas S_f(z)&=&\left(\frac{f''(z)}{f'(z)}\right)'-\frac{1}{2}\left(\frac{f''(z)}{f'(z)}\right)^2\\[2mm]
&=&\Gamma(\gamma, \theta)\left(\frac{z\omega'(z)-\omega(z)}{z^2(1-\omega(z))^2}+\frac{(2-\Gamma(\gamma, \theta))\omega^2(z)}{2z^2(1-\omega(z))^2}\right)\\
&=&\Gamma(\gamma, \theta)\left(\frac{\omega'(z)-\omega(z)/z}{z(1-\omega(z))^2}+\frac{(2-\Gamma(\gamma, \theta))\omega^2(z)}{2z^2(1-\omega(z))^2}\right).\eeas
Let us consider the transformation $h(z)=\omega'(z)-\omega(z)/z$. In view of \textrm{Lemma \ref{lem1}}, the function $h(z)$ varies over the closed disk
\bea\label{a2} |h(z)|\leq \frac{|z|^2-|\omega(z)|^2}{|z|\left(1-|z|^2\right)}\quad\text{ for fixed}\quad |z|<1.\eea
Using (\ref{a2}), we have
\beas |S_f(z)|&=&\left|\Gamma(\gamma, \theta)\right|\left|\left(\frac{h(z)}{z(1-\omega(z))^2}+\frac{(2-\Gamma(\gamma, \theta))\omega^2(z)}{2z^2(1-\omega(z))^2}\right)\right|\\[2mm]
&\leq&\left|\Gamma(\gamma, \theta)\right| \left(\frac{|h(z)|}{|z|(1-|\omega(z)|)^2}+\frac{|2-\Gamma(\gamma, \theta)| |\omega(z)|^2}{2|z|^2(1-|\omega(z)|)^2}\right)\\[2mm]
&\leq&\left|\Gamma(\gamma, \theta)\right| \left(\frac{|z|^2-|\omega(z)|^2}{|z|^2(1-|z|^2)(1-|\omega(z)|)^2}+\frac{|2-\Gamma(\gamma, \theta)| |\omega(z)|^2}{2|z|^2(1-|\omega(z)|)^2}\right).\eeas
For $0<t:=|\omega(z)|\leq |z|<1$, we have
\beas |S_f(z)|&\leq& \left|\Gamma(\gamma, \theta)\right| \left(\frac{|z|^2-t^2}{|z|^2(1-|z|^2)(1-t)^2}+\frac{|2-\Gamma(\gamma, \theta)| t^2}{2|z|^2(1-t)^2}\right)\\[2mm]
&=& \left|\Gamma(\gamma, \theta)\right| \left(\frac{2|z|^2-2t^2+|2-\Gamma(\gamma, \theta)| (1-|z|^2)t^2}{2|z|^2(1-|z|^2)(1-t)^2}\right).\eeas
Therefore, we have
\bea \label{e7}|S_f(z)|\leq  \left|\Gamma(\gamma, \theta)\right| F_1(|z|,t),\eea
where 
\beas F_1(r, t)=\frac{2r^2+\left(|2-\Gamma(\gamma, \theta)| (1-r^2)-2\right)t^2}{2r^2(1-r^2)(1-t)^2}\quad\text{for}\quad r=|z|.\eeas
The objective is to determine the maximum of $F_1(r, t)$ on $\Omega=\{(r, t): 0< t\leq r<1\}$.
Differentiating partially $F_1(r, t)$ with respect to $t$, we obtain
\beas\frac{\pa}{\pa t}F_1(r, t)&=&\frac{4r^2+2\left(|2-\Gamma(\gamma, \theta)| (1-r^2)-2\right)t^2}{2r^2(1-r^2)(1-t)^3}+\frac{2\left(|2-\Gamma(\gamma, \theta)| (1-r^2)-2\right)t}{2r^2(1-r^2)(1-t)^2}\\[2mm]
&=&\frac{4r^2+2\left(|2-\Gamma(\gamma, \theta)| (1-r^2)-2\right)t^2+2\left(|2-\Gamma(\gamma, \theta)| (1-r^2)-2\right)t(1-t)}{2r^2(1-r^2)(1-t)^3}\\
&=&\frac{2r^2+\left(|2-\Gamma(\gamma, \theta)| (1-r^2)-2\right)t}{r^2(1-r^2)(1-t)^3}.\eeas
Note that $|2-\Gamma(\gamma, \theta)|=\sqrt{4+(\gamma^2+4\gamma)\cos^2\theta}>2$ for all $\gamma>0$ and $|\theta|<\pi/2$. For $\gamma>0$ and $|\theta|<\pi/2$, we have 
\beas |2-\Gamma(\gamma, \theta)| (1-r^2)-2>-2r^2~~\text{and}~~2r^2+\left(|2-\Gamma(\gamma, \theta)| (1-r^2)-2\right)t>2r^2(1-t)>0\eeas
for all $(r, t)\in\Omega$.
Thus, we have $\frac{\pa}{\pa t}F_1(r, t)\geq 0$ for all $(r, t)\in\Omega$, which shows that $F_1(r, t)$ is a monotonically increasing function of $t\in(0,r]$. Consequently, $F_1(r, t)\leq F_1(r, r)$ and hence, we have 
\beas |S_f|\leq\frac{\left|\Gamma(\gamma, \theta)\right| \left(|2-\Gamma(\gamma, \theta)|\right)}{2(1-r)^2}=\frac{\gamma\cos\theta\sqrt{4+(\gamma^2+4\gamma)\cos^2\theta}}{2(1-r)^2}.\eeas
\indent To show that the estimate is sharp, we consider the function $f_1$ given by
\bea\label{e1} f_1(z)=\dfrac{1-(1-z)^{\gamma e^{-i\theta}\cos\theta+1}}{\gamma e^{-i\theta}\cos\theta+1}\in \mathcal{C}_\theta(\gamma).\eea
The Schwarzian derivative of $f_1$ is given by
\beas &&S_{f_1}(z)=\left(\frac{f_1''(z)}{f_1'(z)}\right)'-\frac{1}{2}\left(\frac{f_1''(z)}{f_1'(z)}\right)^2=-\frac{\left(\gamma e^{-i\theta}\cos\theta+2\right)\gamma e^{-i\theta}\cos\theta}{2(1-z)^2},\\[2mm]\text{\it i.e.,}
&&\left|S_{f_1}(z)\right|=\frac{\left|\gamma e^{-i\theta}\cos\theta+2\right| \left|\gamma e^{-i\theta}\cos\theta\right|}{2|1-z|^2}.\eeas
On the positive real axis, we note that
\beas \left|S_{f_1}(z)\right|=\frac{\gamma \cos\theta \sqrt{4+(\gamma^2+4\gamma)\cos^2\theta}}{2(1-|z|)^2}.\eeas
This completes the proof.
\end{proof}
\begin{rem} Let $f\in\mathcal{C}_\theta(\gamma)$, then in view of \textrm{Theorem \ref{Th1}}, we have 
\beas \left|zP_f(z)\right|=\left|\frac{zf''(z)}{f'(z)}\right|=\left|\frac{-e^{-i\theta}\gamma\cos \theta~\omega(z)}{1-\omega(z)}\right|\leq \frac{\gamma\cos \theta ~|\omega(z)|}{1-|\omega(z)|}\leq \frac{\gamma\cos \theta ~|z|}{1-|z|}.\eeas
Thus, we have $\sup_{z\in\mathbb{D}}\left(1-|z|^2\right) \left|zP_f(z)\right|\leq 2\gamma\cos \theta$. 
According to Becker's univalence criterion, the function $f\in \mathcal{C}_\theta(\gamma)$ is univalent in the unit disk $\mathbb{D}$ for those values of $\gamma>0$ and $|\theta|<\pi/2$ such that the condition $2\gamma\cos \theta\leq 1$ holds. In the case of $\theta=0$, the functions belonging to the class $\mathcal{C}_\theta(\gamma)$ are univalent in 
$\mathbb{D}$, provided that $\gamma\leq 1/2$. It is evident that as $|\theta|\to \pi/2$, the range of values for $\gamma$ increases.  
\end{rem}
In the following result, we establish the sharp estimate of the Schwarzian norm for functions $f$ in the class $\mathcal{C}_\theta(\gamma)$.
\begin{theo}\label{Th2} For $|\theta|<\pi/2$ and $\gamma>0$, let $f$ be in the class $\mathcal{C}_\theta(\gamma)$. Then the Schwarzian norm satisfies the following inequality
\beas \Vert S_f\Vert \leq2\gamma\cos\theta\sqrt{4+(\gamma^2+4\gamma)\cos^2\theta}\eeas
and the estimate is best possible.
\end{theo}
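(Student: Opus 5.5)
The plan is to multiply the pointwise bound of Theorem \ref{Th1} by $(1-|z|^2)^2$ and take the supremum. Write $r=|z|$ and $C(\gamma,\theta)=\gamma\cos\theta\sqrt{4+(\gamma^2+4\gamma)\cos^2\theta}$. Theorem \ref{Th1} gives $|S_f(z)|\leq C(\gamma,\theta)/(2(1-r)^2)$, so
\beas (1-|z|^2)^2|S_f(z)|\leq \frac{C(\gamma,\theta)(1-r)^2(1+r)^2}{2(1-r)^2}=\frac{C(\gamma,\theta)}{2}(1+r)^2<2C(\gamma,\theta).\eeas
Taking the supremum over $z\in\mathbb{D}$ yields $\Vert S_f\Vert\leq 2C(\gamma,\theta)$, which is the claimed inequality.

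To show the bound is best possible, I will use the extremal function $f_1$ from Theorem \ref{Th1}. Its Schwarzian is $S_{f_1}(z)=-\left(\gamma e^{-i\theta}\cos\theta+2\right)\gamma e^{-i\theta}\cos\theta/(2(1-z)^2)$, and $|\gamma e^{-i\theta}\cos\theta+2|=\sqrt{4+(\gamma^2+4\gamma)\cos^2\theta}$. Take $z=r\in(0,1)$ on the positive real axis. Then
\beas (1-r^2)^2|S_{f_1}(r)|=\frac{C(\gamma,\theta)}{2}(1+r)^2,\eeas
and this tends to $2C(\gamma,\theta)$ as $r\to1^-$. Hence $\Vert S_{f_1}\Vert=2C(\gamma,\theta)$, so the estimate is attained as a supremum, though not at any interior point.

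The only point needing care is confirming that $f_1\in\mathcal{C}_\theta(\gamma)$ and that it is locally univalent. Both follow from $1+zf_1''/f_1'=(1-(1+\gamma e^{-i\theta}\cos\theta)z)/(1-z)=g(z)$, which is exactly the subordinating function in (\ref{a1}), together with $f_1'(z)=(1-z)^{\gamma e^{-i\theta}\cos\theta}\neq0$ in $\mathbb{D}$. The computation of $S_{f_1}$ was already carried out in the proof of Theorem \ref{Th1}, so nothing beyond the limiting argument is needed.
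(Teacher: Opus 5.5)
Your proposal is correct and matches the paper's proof: you multiply the pointwise bound of Theorem \ref{Th1} by $(1-|z|^2)^2$ to get $\tfrac{1}{2}C(\gamma,\theta)(1+|z|)^2\le 2C(\gamma,\theta)$, and you obtain sharpness from $f_1$ along the positive real axis as $r\to1^-$. Your explicit check that $f_1\in\mathcal{C}_\theta(\gamma)$ with $f_1'(z)=(1-z)^{\gamma e^{-i\theta}\cos\theta}\neq0$ is a detail the paper leaves implicit.
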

\begin{proof} As $f$ is in the class $\mathcal{C}_\theta(\gamma)$ for $|\theta|<\pi/2$ and $\gamma>0$, in view of \textrm{Theorem \ref{Th1}}, the Schwarzian derivative of $f$ satisfies the following inequality
\beas |S_f| \leq \frac{\gamma\cos\theta\sqrt{4+(\gamma^2+4\gamma)\cos^2\theta}}{2(1-|z|)^2}.\eeas  
Therefore, the Schwarzian norm of $f$ is 
\beas \Vert S_f\Vert =\sup_{z\in\mathbb{D}}\left(1-|z|^2\right)^2\left|S_f(z)\right|\leq 2\gamma\cos\theta\sqrt{4+(\gamma^2+4\gamma)\cos^2\theta}.\eeas
\indent To show that the estimate is sharp, we consider the function $f_1$ given in (\ref{e1}) and thus, we have 
\beas \sup_{z\in\mathbb{D}}\left(1-|z|^2\right)^2\left|S_{f_1}(z)\right|=\sup_{z\in\mathbb{D}}\left(1-|z|^2\right)^2\frac{\gamma \cos\theta \sqrt{4+(\gamma^2+4\gamma)\cos^2\theta}}{2|1-z|^2}.\eeas
On the positive real axis, we have 
\beas \sup_{0\leq z<1}\left(1-z^2\right)^2\frac{\gamma \cos\theta \sqrt{4+(\gamma^2+4\gamma)\cos^2\theta}}{2(1-z)^2}=2\gamma \cos\theta \sqrt{4+(\gamma^2+4\gamma)\cos^2\theta}.\eeas
This completes the proof.
\end{proof}
If we put $\theta= 0$ in \textrm{Theorem \ref{Th2}}, we get the Schwarzian norm estimate for the class $G(\gamma)$, which was proved by Ali and Pal \cite{1AP2023}.
\begin{cor}
For $\gamma > 0$, let $f\in G(\gamma)$ be of the form (\ref{e2}). Then the Schwarzian norm satisfy the following sharp inequality
\beas \Vert S_f\Vert \leq2\gamma(\gamma+2).\eeas
\end{cor}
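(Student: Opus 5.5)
The plan is to specialize Theorem \ref{Th2} to $\theta=0$ and observe that $\mathcal{C}_0(\gamma)$ is exactly $\mathcal{G}(\gamma)$. Indeed, for $\theta=0$ the defining inequality of $\mathcal{C}_\theta(\gamma)$ reads $\textrm{Re}\left(1+zf''(z)/f'(z)\right)<1+\gamma/2$. This is precisely the condition defining $\mathcal{G}(\gamma)$, and the constraint $|\theta|<\pi/2$ is trivially satisfied by $\theta=0$. Hence every $f\in\mathcal{G}(\gamma)$ of the form (\ref{e2}) lies in $\mathcal{C}_0(\gamma)$, and Theorem \ref{Th2} applies directly.

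Next I substitute $\cos\theta=1$ into the bound of Theorem \ref{Th2}. This gives
\beas \Vert S_f\Vert\leq 2\gamma\sqrt{4+\gamma^2+4\gamma}=2\gamma\sqrt{(\gamma+2)^2}=2\gamma(\gamma+2),\eeas
where I use $\gamma+2>0$ to remove the square root.

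For sharpness, I take the extremal function (\ref{e1}) with $\theta=0$, namely
\beas f_1(z)=\frac{1-(1-z)^{\gamma+1}}{\gamma+1}.\eeas
This function belongs to $\mathcal{G}(\gamma)$ because $1+zf_1''/f_1'=(1-(\gamma+1)z)/(1-z)$, and this expression is the subordinating function $g$ of (\ref{a1}) at $\theta=0$, which maps $\mathbb{D}$ onto the half-plane $\textrm{Re}\,w<1+\gamma/2$. Its Schwarzian derivative is $S_{f_1}(z)=-\gamma(\gamma+2)/(2(1-z)^2)$. Letting $z=r\to1^-$ along the real axis, the quantity $(1-r^2)^2|S_{f_1}(r)|=\gamma(\gamma+2)(1+r)^2/2$ tends to $2\gamma(\gamma+2)$.

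The argument has no genuine obstacle. The only points needing care are confirming that the class identification is exact (membership and the function normalization agree) and that the square-root simplification is valid for $\gamma>0$.
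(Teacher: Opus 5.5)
Your proposal is correct and follows the paper's route exactly: the corollary is obtained by setting $\theta=0$ in Theorem \ref{Th2}, where $\sqrt{4+\gamma^2+4\gamma}=\gamma+2$. Sharpness comes from the extremal function $f_1$ of (\ref{e1}) at $\theta=0$, and your checks that $f_1\in\mathcal{G}(\gamma)$ and that $(1-r^2)^2|S_{f_1}(r)|\to 2\gamma(\gamma+2)$ are accurate.
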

 Let $\delta(\gamma,\theta):=\gamma\cos\theta\sqrt{4+(\gamma^2+4\gamma)\cos^2\theta}$. In view of \textrm{Theorem \ref{Th2}} and \textrm{Theorem A}, we  immediately obtain the following result for functions in $C_\theta(\gamma)$. 
 \begin{cor}\label{cor1}
 Let $\gamma>0$ and $|\theta|<\pi/2$ be such that $\delta(\gamma,\theta)\in[0,1)$, and $f\in \mathcal{C}_\theta(\gamma)$. Then $f$ extends to an $(1+\delta(\gamma,\theta))/(1-\delta(\gamma,\theta))$-quasiconformal mapping.
 \end{cor}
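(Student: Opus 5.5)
The plan is to combine Theorem \ref{Th2} with the second (sufficiency) half of Theorem A. The one thing to get right is which constant plays the role of $2k$ in Theorem A.

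First, let $f\in\mathcal{C}_\theta(\gamma)$. By Theorem \ref{Th2},
\beas \Vert S_f\Vert\leq 2\gamma\cos\theta\sqrt{4+(\gamma^2+4\gamma)\cos^2\theta}=2\delta(\gamma,\theta).\eeas
Set $k:=\delta(\gamma,\theta)$. The hypothesis $\delta(\gamma,\theta)\in[0,1)$ is exactly the requirement $0\leq k<1$ in Theorem A. So the inequality above reads $\Vert S_f\Vert\leq 2k$.

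Second, I check the remaining hypothesis of Theorem A, namely that $f$ is locally univalent and analytic in $\mathbb{D}$, so that $S_f$ is defined. This follows from the subordination (\ref{a1}). The quantity $1+zf''/f'$ is analytic in $\mathbb{D}$, so $f''/f'$ is analytic, which forces $f'\neq0$ throughout $\mathbb{D}$.

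Finally, the converse part of Theorem A gives that $f$ extends to a $K$-quasiconformal mapping of the Riemann sphere, with
\beas K=\frac{1+k}{1-k}=\frac{1+\delta(\gamma,\theta)}{1-\delta(\gamma,\theta)}.\eeas

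The only delicate point is the factor $2$. Theorem \ref{Th2} bounds $\Vert S_f\Vert$ by $2\delta$, and the hypothesis $\Vert S_f\Vert\leq 2k$ in Theorem A absorbs this factor exactly, which is why $k=\delta$ rather than $2\delta$. No sharpness claim is needed, so the proof is complete once these checks are made.
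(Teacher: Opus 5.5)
Your proof is correct and matches the paper's argument: Corollary~\ref{cor1} is obtained there directly by combining the bound $\Vert S_f\Vert\leq 2\delta(\gamma,\theta)$ from Theorem~\ref{Th2} with the sufficiency half of Theorem A, taking $k=\delta(\gamma,\theta)$. Your explicit check that $f'\neq 0$ in $\mathbb{D}$ is a harmless addition; the paper leaves it implicit in the definition of $\mathcal{C}_\theta(\gamma)$.
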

\begin{rem}
In view of Nehari's criterion for univalence, the functions $f$ in the class $\mathcal{C}_\theta(\gamma)$ are univalent in $\mathbb{D}$ whenever $\delta(\gamma,\theta)\in[0,1)$, where $\gamma>0$ and $|\theta|<\pi/2$. 
\end{rem}
In Table \ref{tab0}, we obtain the values of $\gamma$ for certain values of $\theta$ in $(-\pi/2, \pi/2)$ for which $\delta(\gamma,\theta)\in[0,1)$. The values are obtained by solving $\delta(\gamma,\theta) < 1$.
\begin{table}[H]
\centering
\caption{Range of values for \(\gamma\) corresponding to certain values of \(\theta\) for which \(\delta(\gamma,\theta) \in [0,1)\). The values are obtained by solving \(\delta(\gamma,\theta) < 1\).}
\begin{tabular}{|c|c|}
\hline
\(\theta\) & Range of \(\gamma\) \\ \hline
\(\pm\pi/4\) & \(0 < \gamma < 0.608465\) \\ \hline
\(\pm\pi/5\) & \(0 < \gamma < 0.524526\) \\ \hline
\(\pm\pi/6\) & \(0 < \gamma < 0.486367\) \\ \hline
\(\pm\pi/10\) & \(0 < \gamma < 0.438151\) \\ \hline
\end{tabular}
\caption{The range of values for $\gamma$ corresponding to certain values of $\theta$ for which $\delta(\gamma,\theta)$ is in $[0, 1)$}
\label{tab0}\end{table}
In the following result, we obtain the estimate of the pre-Schwarzian norm for functions in the class $\mathcal{HC}_{\theta}(\gamma)$.
\begin{theo}\label{Th3} For $|\theta|<\pi/2$ and $\gamma>0$, let $f=h+\ol{g}$ be in the class $\mathcal{HC}_\theta(\gamma)$. Then the pre-Schwarzian norm satisfies the following inequality
\beas \Vert P_f\Vert \leq 1+2\gamma \cos\theta.\eeas
and the estimate is best possible whenever $\theta=0$.
\end{theo}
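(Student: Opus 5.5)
Write $P_f = h''/h' - \overline{\omega}\omega'/(1-|\omega|^2)$ and bound the two terms separately using the triangle inequality:
\[
(1-|z|^2)|P_f(z)| \le (1-|z|^2)\left|\frac{h''(z)}{h'(z)}\right| + (1-|z|^2)\frac{|\omega(z)||\omega'(z)|}{1-|\omega(z)|^2}.
\]

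\emph{Analytic part.} Since $h\in\mathcal{C}_\theta(\gamma)$, the Remark following Theorem \ref{Th1} applies. There exists $\omega_0\in\mathcal{B}_0$ with
\[
\frac{h''(z)}{h'(z)}=\frac{-e^{-i\theta}\gamma\cos\theta\,\omega_0(z)}{z(1-\omega_0(z))},
\]
and Schwarz's lemma gives $|\omega_0(z)/z|\le 1$ and $|1-\omega_0(z)|\ge 1-|z|$. Hence
\[
(1-|z|^2)\left|\frac{h''(z)}{h'(z)}\right|\le \frac{\gamma\cos\theta\,(1-|z|^2)}{1-|z|}=\gamma\cos\theta\,(1+|z|)\le 2\gamma\cos\theta.
\]

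\emph{Dilatation part.} Since $\omega\in\mathrm{Aut}(\mathbb{D})$, equality holds in Schwarz--Pick: $|\omega'(z)|=(1-|\omega(z)|^2)/(1-|z|^2)$. The second term therefore equals $|\omega(z)|$, which is strictly less than $1$.

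Adding the two bounds gives $(1-|z|^2)|P_f(z)|< 1+2\gamma\cos\theta$ for every $z$, and taking the supremum proves $\Vert P_f\Vert\le 1+2\gamma\cos\theta$.

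\emph{Sharpness for $\theta=0$.} This is the main difficulty: both terms must approach their maxima along a common path, with compatible arguments. Take $h=f_1$ from (\ref{e1}) with $\theta=0$. Then $h''/h'=-\gamma/(1-z)$, so on the real axis $(1-x^2)|h''/h'|=\gamma(1+x)\to 2\gamma$ as $x\to1^-$.

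For the dilatation, choose $\omega(z)=(z-\alpha)/(1-\alpha z)$ with real $\alpha\in(-1,1)$. On $(0,1)$ we have $\omega'(x)>0$, and $\omega(x)>0$ once $x>\alpha$. So $-\overline{\omega}\omega'/(1-|\omega|^2)$ is negative real there, the same sign as $h''/h'$, and the two terms add without cancellation. Then
\[
(1-x^2)|P_f(x)|=\gamma(1+x)+\omega(x)\to 2\gamma+1 \quad (x\to1^-),
\]
since $\omega(x)\to1$. It remains to check that $g$, defined by $g'=\omega h'$ and $g(0)=0$, makes $f=h+\overline{g}$ a member of $\mathcal{HC}_0(\gamma)$; this holds because $\omega\in\mathrm{Aut}(\mathbb{D})$, so $f$ is locally univalent and sense-preserving. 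Hence the bound $1+2\gamma$ is attained in the limit, so it is best possible.

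For $\theta\neq0$ this argument breaks down: $h''/h'$ carries the phase $e^{-i\theta}$, which prevents exact alignment with the real term $|\omega|$ along a single path. This explains why sharpness is claimed only for $\theta=0$.
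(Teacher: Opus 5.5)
Your proof is correct and follows the paper's argument essentially step for step. Both use the triangle inequality and the bound $|h''/h'|\le \gamma\cos\theta/(1-|z|)$; you get this directly from Schwarz's lemma, while the paper uses monotonicity of $G_1(|z|,t)$ in $t$. Both then use the Schwarz--Pick equality for the automorphism term. For sharpness at $\theta=0$, you use the same extremal $h$ and a real-parameter automorphism equal to $-\omega$ of the paper's choice, which leaves $\overline{\omega}\omega'$ unchanged, evaluated on the positive real axis. Your limit $x\to1^-$ in place of the paper's monotonicity of $F_2$ is equally valid, because the upper bound already caps the norm at $1+2\gamma$.
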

\begin{proof}
As $f=h+\ol{g}\in\mathcal{HC}_\theta(\gamma)$, thus, $h$ satisfies the following condition 
\beas \textrm{Re}\left(e^{i\theta}\left(1+\frac{z h''(z)}{h'(z)}\right)\right)<\left(1+\frac{\gamma}{2}\right)\cos\theta\quad \text{for}~ z\in\mathbb{D},~ \gamma>0~\text{and} ~|\theta|<\frac{\pi}{2}.\eeas
In view of (\ref{a1}), we have 
\beas 1+\frac{zh''(z)}{h'(z)}\prec \frac{1-e^{-i\theta}\left(\gamma\cos \theta+e^{i\theta}\right)z}{1-z}.\eeas
Thus, there exist a Schwarz function $\omega\in\mathcal{B}_0$ such that
\beas \frac{h''(z)}{h'(z)}=\frac{\Gamma(\gamma, \theta)\omega(z)}{z(1-\omega(z))}.\eeas
where $\Gamma(\gamma, \theta)=-e^{-i\theta}\gamma\cos \theta$. For $0<t:=|\omega(z)|\leq |z|<1$, we have 
\beas \left|\frac{h''(z)}{h'(z)}\right|\leq \frac{\gamma t \cos\theta }{|z|(1-t)}:=G_1(|z|, t).\eeas
Differentiating partially $G_1(|z|, t)$ with respect to $t$, we obtain
\beas \frac{\pa}{\pa t} G_1(|z|, t)=\frac{\gamma \cos\theta}{|z|(1-t)^2}>0,\eeas
which shows that $G_1(|z|, t)$ is a monotonically increasing function $t\in(0, |z|]$ and hence, we have $G_1(|z|, t)\leq G_1(|z|, |z|)$, {\it i.e.,}
\beas \left|\frac{h''(z)}{h'(z)}\right|\leq \frac{\gamma \cos\theta}{1-|z|}.\eeas
Therefore, the pre-Schwarzian norm for the function $f\in\mathcal{HC}_\theta(\gamma)$ is
\beas \Vert P_f\Vert=\sup_{z\in\mathbb{D}}(1-|z|^2)\left|\frac{h''}{h'}-\frac{\overline{\omega}\omega'}{1-|\omega|^2}\right|,\eeas
where $\omega=g'/h'$ is the dilatation of $f$. Since $\omega\in\text{Aut}(\mathbb{D})$, in view of the Schwarz-Pick lemma and triangle inequality, we have 
\beas \Vert P_f\Vert&\leq& \sup_{z\in\mathbb{D}}(1-|z|^2)\left(\left|\frac{h''}{h'}\right|+\left|\frac{\overline{\omega}\omega'}{1-|\omega|^2}\right|\right)\\
&\leq& \sup_{z\in\mathbb{D}}(1-|z|^2)\left(\frac{\gamma \cos\theta}{1-|z|}+\frac{|\overline{\omega}|}{1-|z|^2}\right)\\
&\leq& \sup_{z\in\mathbb{D}}\left(\gamma \cos\theta(1+|z|)+|\omega|\right)=1+2\gamma \cos\theta.\eeas
To prove the sharpness of the result, we consider the function $f_2=h_2+\ol{g_2}\in\mathcal{H}$ with dilatation $\omega(z)=(a-z)/(1-az)$, $a\in(0, 1)$ and $h_2$ be such that 
\beas 1+\frac{zh_2''(z)}{h_2'(z)}=\frac{1-e^{-i\theta}\left(\gamma\cos \theta+e^{i\theta}\right)z}{1-z}.\eeas 
 It is evident that $f_2\in\mathcal{HC}_\theta(\gamma)$ and $\omega'(z)=(a^2-1)/(1-az)^2$. Therefore, 
 \beas&&1-|\omega(z)|^2=1-\frac{(a-z)}{(1-az)}\frac{(a-\ol{z})}{(1-a\ol{z})}=\frac{(1-a^2)(1-|z|^2)}{(1-az)(1-a\ol{z})}\\\text{and}
 &&\frac{\overline{\omega}\omega'}{1-|\omega|^2}=\frac{(a^2-1)}{(1-az)^2}\cdot\frac{(a-\ol{z})}{(1-a\ol{z})}\cdot \frac{(1-az)(1-a\ol{z})}{(1-a^2)(1-|z|^2)}=\frac{\ol{z}-a}{(1-az)(1-|z|^2)}.\eeas
 Therefore, the pre-Schwarzian norm of $f_1$ is 
 \beas \Vert P_{f_2}\Vert &=&\sup_{z\in\mathbb{D}} (1-|z|^2)\left|\frac{h''}{h'}-\frac{\overline{\omega}\omega'}{1-|\omega|^2}\right|\\[2mm]
 &=&\sup_{z\in\mathbb{D}} (1-|z|^2)\left|\frac{\Gamma(\gamma, \theta)}{1-z}-\frac{\ol{z}-a}{(1-az)(1-|z|^2)}\right|,\eeas
 where $\Gamma(\gamma, \theta)=-e^{-i\theta}\gamma\cos \theta$.  Let $\theta=0$, then $\Gamma(\gamma, \theta)=-\gamma$.
 On the positive real axis, we have
 \beas \sup_{0\leq r<1} (1-r^2)\left|\frac{\Gamma(\gamma, \theta)}{1-r}-\frac{r-a}{(1-ar)(1-r^2)}\right|= \sup_{0\leq r<1} \left|\gamma(1+r)+\frac{r-a}{(1-ar)}\right|.\eeas 
 Let $F_2(r):=\gamma(1+r)+(r-a)/(1-ar)$. Therefore, 
 \beas F_2'(r)=\gamma+\frac{1-a^2}{(1-ar)^2}\geq 0,\eeas
 which shows that $F_2(r)$ is a monotonically increasing function of $r\in [0, 1)$ and hence, we have 
 $\Vert P_{f_2}\Vert =\sup_{0\leq r<1} F_2(r)=2\gamma+1$. This completes the proof. 
 \end{proof}  
 \begin{rem} Let $f=h+\ol{g}\in\mathcal{HC}_\theta(\gamma)$. Then, in view of \textrm{Theorem \ref{Th3}}, we have 
 \beas \left|\frac{h''(z)}{h'(z)}\right|\leq \frac{\gamma \cos\theta}{1-|z|},\quad \text{which shows that}\quad \Vert P_h\Vert \leq 2\gamma\cos\theta.\eeas
 Using the function given in (\ref{e1}), we easily show that the estimation is sharp. 
 Hence, we have $\left|\Vert P_f\Vert -\Vert P_h\Vert \right|\leq 1$ for functions $f=h+\ol{g}\in \mathcal{HC}_\theta(\gamma)$ and equality occurs when $\theta=0$. 
 Therefore, the class of functions $\mathcal{HC}_\theta(\gamma)$ supports the result of Liu and Ponnusamy \cite[Theorem 2.1]{LP2018}.
 \end{rem}
\begin{theo}\label{Th4}
For $|\theta|<\pi/2$ and $\gamma>0$, let $f=h+\ol{g}$ be in the class $\mathcal{HC}_\theta(\gamma)$. Then the Schwarzian norm satisfies the following inequality
\beas \Vert S_f\Vert \leq 2\gamma\cos\theta\sqrt{4+(\gamma^2+4\gamma)\cos^2\theta}+2\gamma\cos\theta +11/2.\eeas
\end{theo}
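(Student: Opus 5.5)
Our plan is to estimate $\Vert S_f\Vert$ term by term from the Hern\'andez--Mart\'in formula
\beas S_f= S_h+\frac{\overline{\omega}}{1-|\omega|^2}\left(\frac{h''}{h'}\omega'-\omega''\right)-\frac{3}{2}\left(\frac{\omega'\overline{\omega}}{1-|\omega|^2}\right)^2,\eeas
with $\omega=g'/h'\in\text{Aut}(\mathbb{D})$, using the triangle inequality. After multiplying by $(1-|z|^2)^2$ and taking suprema separately, the target bound splits as a sum of three contributions:
\begin{itemize}
\item $2\gamma\cos\theta\sqrt{4+(\gamma^2+4\gamma)\cos^2\theta}$ from $S_h$;
\item $2\gamma\cos\theta+4$ from the mixed term;
\item $3/2$ from the squared term.
\end{itemize}
Together these give $2\gamma\cos\theta\sqrt{\cdots}+2\gamma\cos\theta+11/2$.

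\textbf{First term.} Since $h\in\mathcal{C}_\theta(\gamma)$, Theorem \ref{Th2} gives directly
\beas \sup_{z\in\mathbb{D}}(1-|z|^2)^2|S_h(z)|\leq 2\gamma\cos\theta\sqrt{4+(\gamma^2+4\gamma)\cos^2\theta}.\eeas

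\textbf{Squared term.} Because $\omega$ is an automorphism, Schwarz--Pick holds with equality: $|\omega'|=(1-|\omega|^2)/(1-|z|^2)$. Hence
\beas (1-|z|^2)^2\cdot\frac{3}{2}\left|\frac{\omega'\overline{\omega}}{1-|\omega|^2}\right|^2=\frac{3}{2}|\omega|^2\leq\frac{3}{2}.\eeas

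\textbf{Mixed term.} We split it into two pieces.
\begin{itemize}
\item For the piece involving $h''/h'$, we use the bound $|h''/h'|\leq\gamma\cos\theta/(1-|z|)$ from the proof of Theorem \ref{Th3} together with the Schwarz--Pick identity. This gives
\beas (1-|z|^2)^2\frac{|\omega|}{1-|\omega|^2}\left|\frac{h''}{h'}\right||\omega'|\leq |\omega|\,\gamma\cos\theta\,(1+|z|)\leq 2\gamma\cos\theta.\eeas
\item For the $\omega''$ piece, we apply Lemma \ref{lem2} with $n=2$, which gives $|\omega''|\leq 2(1-|\omega|^2)/\big((1-|z|)(1-|z|^2)\big)$. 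Then
\beas (1-|z|^2)^2\frac{|\omega||\omega''|}{1-|\omega|^2}\leq 2|\omega|(1+|z|)\leq 4.\eeas
\end{itemize}

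\textbf{Assembling.} Summing the suprema gives the claimed estimate. The only step requiring care is the $\omega''$ term. A sharper route would compute $\omega''$ for the explicit automorphism, where $\omega''=2\bar\alpha e^{i\phi}(1-|\alpha|^2)/(1-\bar\alpha z)^3$, which yields a better constant. However, Lemma \ref{lem2} gives exactly the constant $4$ needed to reach $11/2$, so we use it, checking that $(1-|z|^2)^2/\big((1-|z|)(1-|z|^2)\big)=1+|z|<2$.
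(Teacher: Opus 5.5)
Your proposal is correct and follows the paper's proof essentially line by line. It uses the same Hern\'andez--Mart\'in decomposition with the triangle inequality, the Theorem \ref{Th1}/\ref{Th2} bound for $S_h$, the estimate $|h''/h'|\leq\gamma\cos\theta/(1-|z|)$ from Theorem \ref{Th3}, Schwarz--Pick with equality for automorphisms, and Lemma \ref{lem2} with $n=2$ for the $\omega''$ term. Your final step, bounding $|\omega|\leq 1$ directly, is cleaner than the paper's intermediate step $|\omega(z)|\leq|z|$, which need not hold for an automorphism with $\omega(0)\neq 0$; both routes give the same constant $11/2$.
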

\begin{proof}
Let $f=h+\ol{g}\in\mathcal{HC}_\theta(\gamma)$, then in view of \textrm{Theorem \ref{Th3}}, we have 
\beas 1+\frac{zh''(z)}{h'(z)}\prec \frac{1-e^{-i\theta}\left(\gamma\cos \theta+e^{i\theta}\right)z}{1-z}.\eeas
Using the argument as in the proof of \textrm{Theorem \ref{Th1}}, we obtain the Schwarzian derivative of $h$ satisfies the following inequality
\beas |S_h| \leq \frac{\gamma\cos\theta\sqrt{4+(\gamma^2+4\gamma)\cos^2\theta}}{2(1-|z|)^2}.\eeas  
Again, in view of \textrm{Theorem \ref{Th3}}, we have 
\beas \left|\frac{h''(z)}{h'(z)}\right|\leq \frac{\gamma \cos\theta}{1-|z|}.\eeas 
Therefore, the Schwarzian derivative of $f=h+\ol{g}\in\mathcal{HC}_\theta(\gamma)$ is  
\beas&& S_f= S_h+\frac{\overline{\omega}}{1-|\omega|^2}\left(\frac{h''}{h'}\omega'-\omega''\right)-\frac{3}{2}\left(\frac{\omega'\overline{\omega}}{1-|\omega|^2}\right)^2\\\text{\it i.e.,}
&& |S_f(z)|\leq  |S_h|+\left|\frac{\overline{\omega}}{1-|\omega|^2}\left(\frac{h''}{h'}\omega'-\omega''\right)\right|+\left|\frac{3}{2}\left(\frac{\omega'\overline{\omega}}{1-|\omega|^2}\right)^2\right|.\eeas
Since $\omega\in\text{Aut}(\mathbb{D})$, in view of \textrm{Lemma \ref{lem2}}, we have 
\beas (1-|z|^2)^2\frac{\left|\omega(z)\omega''(z)\right|}{1-|\omega(z)|^2}\leq 2(1+|z|)|\omega(z)|.\eeas
In view of the Schwarz-Pick lemma, we have
\beas
(1-|z|^2)^2|S_f(z)|&\leq& (1-|z|^2)^2|S_h(z)|+(1-|z|^2)\frac{\left|\overline{\omega(z)}\omega'(z)\right|}{1-|\omega(z)|^2}\cdot (1-|z|^2)\left|\frac{h''(z)}{h'(z)}\right|\\[2mm]
&&+(1-|z|^2)^2\frac{\left|\ol{\omega(z)}\omega''(z)\right|}{1-|\omega(z)|^2}+\frac{3}{2} \left((1-|z|^2)\frac{|\omega'(z)\overline{\omega(z)}|}{1-|\omega(z)|^2}\right)^2\\
&\leq&(1+|z|)^2\frac{\gamma\cos\theta\sqrt{4+(\gamma^2+4\gamma)\cos^2\theta}}{2}+(1+|z|)|\omega(z)|\gamma \cos\theta\\[2mm]
&&+2(1+|z|)|\omega(z)|+\frac{3}{2}|\omega(z)|^2\\[2mm]
&\leq&(1+|z|)^2\frac{\gamma\cos\theta\sqrt{4+(\gamma^2+4\gamma)\cos^2\theta}}{2}+(1+|z|)|z|\gamma \cos\theta\\
&&+2(1+|z|)|z|+\frac{3}{2}|z|^2.\eeas
Therefore, the Schwarzian norm of $f\in\mathcal{HC}_\theta(\gamma)$ is 
\beas\Vert S_f\Vert=\sup_{z\in\mathbb{D}}\;(1-|z|^2)^2|S_f(z)|\leq 2\gamma\cos\theta\sqrt{4+(\gamma^2+4\gamma)\cos^2\theta}+2\gamma\cos\theta +11/2.\eeas
This completes the proof.
\end{proof}
\begin{rem}
The bound in Theorem 2.4 is not necessarily sharp. Finding the sharp bound for $\|S_f\|$ in $\mathcal{HC}_{\theta}(\gamma)$ remains an open problem, even in the special case $\theta = 0$.
\end{rem}
\begin{cor} For $|\theta|<\pi/2$ and $\gamma>0$, let $f=h+\ol{g}$ be in the class $\mathcal{HC}_\theta(\gamma)$ such that $g'(z)/h'(z)=z$. Then the Schwarzian norm satisfies the following inequality
\beas \Vert S_f\Vert \leq 2\gamma\cos\theta\sqrt{4+(\gamma^2+4\gamma)\cos^2\theta}+2\gamma\cos\theta +3/2.\eeas
\end{cor}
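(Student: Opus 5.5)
With $\omega(z)=z$ the harmonic correction terms can be computed exactly rather than estimated through Schwarz--Pick and Lemma \ref{lem2}. The plan is to reuse the splitting from the proof of Theorem \ref{Th4}, now with $\omega'=1$ and $\omega''=0$.

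First, since $h\in\mathcal{C}_\theta(\gamma)$, the argument of Theorem \ref{Th1} gives
\beas |S_h(z)|\leq \frac{\gamma\cos\theta\sqrt{4+(\gamma^2+4\gamma)\cos^2\theta}}{2(1-|z|)^2},\eeas
and the proof of Theorem \ref{Th3} gives $|h''(z)/h'(z)|\leq \gamma\cos\theta/(1-|z|)$.

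Second, I substitute $\omega(z)=z$ into the Hern\'andez--Mart\'in formula for $S_f$. This gives
\beas S_f=S_h+\frac{\ol{z}}{1-|z|^2}\,\frac{h''}{h'}-\frac{3}{2}\left(\frac{\ol{z}}{1-|z|^2}\right)^2 .\eeas
The term containing $\omega''$ is absent. That term is exactly what produced the contribution $2(1+|z|)|z|\le 4$ in Theorem \ref{Th4}, and losing it is why the constant $11/2=4+3/2$ drops to $3/2$.

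Third, I multiply by $(1-|z|^2)^2$ and apply the triangle inequality to get
\beas (1-|z|^2)^2|S_f(z)|\leq \frac{(1+|z|)^2}{2}\,\gamma\cos\theta\sqrt{4+(\gamma^2+4\gamma)\cos^2\theta}+\gamma\cos\theta\,|z|(1+|z|)+\frac{3}{2}|z|^2 .\eeas
Each term on the right is increasing in $r=|z|$. Letting $r\to1^-$ gives the bounds $2\gamma\cos\theta\sqrt{\cdots}$, $2\gamma\cos\theta$ and $3/2$ respectively, and taking the supremum over $\mathbb{D}$ yields the stated inequality.

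There is essentially no obstacle here. The only point to verify is that $\omega(z)=z$ is admissible, i.e.\ it lies in $\mathrm{Aut}(\mathbb{D})$ (take $\alpha=0$) and the hypothesis $g'=zh'$ is consistent with $f\in\mathcal{HC}_\theta(\gamma)$. After that, the argument is a specialization of the proof of Theorem \ref{Th4}.
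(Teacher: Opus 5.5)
Your proposal is correct and follows the paper's proof essentially verbatim. You substitute $\omega(z)=z$ (so $\omega'=1$, $\omega''=0$) into the Hern\'andez--Mart\'in formula, multiply by $(1-|z|^2)^2$ and apply the triangle inequality, then bound the three terms using the $|S_h|$ estimate of Theorem~\ref{Th1} and $|h''/h'|\le \gamma\cos\theta/(1-|z|)$ from Theorem~\ref{Th3}; your observation that the vanishing $\omega''$ term explains the drop from $11/2$ to $3/2$ is also accurate.
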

\begin{proof}Using argument as in the proof of \textrm{Theorem \ref{Th4}}, we obtain the Schwarzian derivative of $f=h+\ol{g}$ as follows:  
\beas&& S_f= S_h+\frac{\overline{z}}{1-|z|^2}\cdot\frac{h''(z)}{h'(z)}-\frac{3}{2}\left(\frac{\overline{z}}{1-|z|^2}\right)^2\\\text{\it i.e.,}
&&(1-|z|^2)^2 |S_f(z)|\leq  (1-|z|^2)^2|S_h|+|z|\cdot(1-|z|^2)\left|\frac{h''(z)}{h'(z)}\right|+\frac{3}{2}|z|^2.\eeas
Therefore, the Schwarzian norm of $f=h+\ol{g}$ is 
\beas\Vert S_f\Vert=\sup_{z\in\mathbb{D}}\;(1-|z|^2)^2|S_f(z)|\leq 2\gamma\cos\theta\sqrt{4+(\gamma^2+4\gamma)\cos^2\theta}+2\gamma\cos\theta +3/2.\eeas
This completes the proof.
\end{proof}
\section*{Declarations}
\noindent{\bf Acknowledgment:} The work of the second author is supported by University Grants Commission (IN) fellowship (No. F. 44 - 1/2018 (SA - III)).\\[2mm]
{\bf Conflict of Interest:} The authors declare that there are no conflicts of interest regarding the publication of this paper.\\[2mm]
{\bf Availability of data and materials:} Not applicable.\\[2mm]
{\bf Authors' contributions:} All authors contributed equally to the investigation of the problem, and all authors have read and approved the final manuscript.


\begin{thebibliography}{33}
\bibitem{AO2014}
{\sc R. Aghalary} and {\sc Z. Orouji}, Norm Estimates of the Pre-Schwarzian Derivatives for $\alpha$-Spiral-Like Functions of Order $\rho$, \textit{Complex Anal. Oper. Theory} \textbf{8}(4) (2014), 791--801.
\bibitem{ABM2025}{\sc V. Allu, R. Biswas} and {\sc R. Mandal}, An estimation of the pre-Schwarzian norm for certain classes of analytic functions, https://doi.org/10.48550/arXiv.2504.19132.
\bibitem{AP2023}{\sc M. F. Ali} and {\sc S. Pal}, Pre-Schwarzian norm estimates for the class of Janowski starlike functions, \textit{Monatsh. Math.} \textbf{201}(2) (2023), 311--327.
\bibitem{1AP2023} {\sc M. F. Ali} and {\sc S. Pal}, Schwarzian norm estimates for some classes of analytic functions, {\it Mediterr. J. Math.} {\bf20} (2023), 294.
\bibitem{2AP2023}{\sc M. F. Ali} and {\sc S. Pal}, Schwarzian norm estimate for functions in Robertson class, {\it Bull. Sci. Math.}, {\bf188} (2023), 103335.
\bibitem{AP2024}{\sc M. F. Ali} and {\sc S. Pal}, The Schwarzian norm estimates for Janowski convex functions, \textit{Proc. Edinb. Math. Soc.} {\bf67}(2) (2024), 299--315.
\bibitem{B1972} {\sc J. Becker}, L\"{o}wnersche Differentialgleichung und quasikonform fortsetzbare schlichte Funktionen, {\it J. Reine Angew. Math.} {\bf255} (1972), 23--43.
\bibitem{BP1984} {\sc J. Becker} and {\sc C. Pommerenke}, Schlichtheitskriterien und Jordangebiete, {\it J. Reine Angew. Math.} {\bf 354} (1984), 74--94.
\bibitem{B2025}{\sc R. Biswas}, Pre-Schwarzian norm estimation for functions in the Ma-Minda-type starlike and convex classes, https://doi.org/10.48550/arXiv.2505.01910.
\bibitem{CKPS2005}{\sc J. H. Choi, Y. C. Kim, S. Ponnusamy} and {\sc T. Sugawa}, Norm estimates for the Alexander transforms of convex functions of order alpha. \textit{J. Math. Anal. Appl.} \textbf{303} (2005), 661--668.
\bibitem{DP2008}{\sc S. Y.  Dai} and {\sc Y. F. Pan}, Note on Schwarz-Pick estimates for bounded and positive real part analytic functions, {\it Proc. Am. Math. Soc.} {\bf136} (2008), 635--640.
\bibitem{D1931}{\sc  J. A. Dieudonn\'e}, Recherches sur quelques probl\`emes relatifs aux polyn\^omes et aux fonctions born\'ees d'une variable complexe, {\it Ann. Sci. \'Ec. Norm. Sup\'er.} {\bf48} (1931), 247--358.
\bibitem{D1983} {\sc P. L. Duren},  Univalent functions, {\it Grundlehren der mathematischen Wissenschaften}, vol. {\bf259}, Springer-Verlag, New York, Berlin, Heidelberg, Tokyo, 1983.
\bibitem{D2004}{\sc P. Duren}, Harmonic Mapping in the Plane, {\it Cambridge University Press}, New York, 2004.
\bibitem{G2016}{\sc S. Y. Graf}, On the Schwarzian norm of harmonic mappings, {\it Probl. Anal. Issues Anal.} {\bf5}(23) (2016), 20--32.
 \bibitem{HM2015}{\sc R. Hern{\'a}ndez} and {\sc M. J. Mart\'in},  Pre-Schwarzian and Schwarzian derivatives of harmonic mappings, {\it J. Geomet. Anal.}, {\bf 25}(1)  (2015), 64--91.
 \bibitem{1HM2015}{\sc R. Hern{\'a}ndez} and {\sc M. J. Mart\'in}, Criteria for univalence and quasiconformal extension of harmonic mappings in terms of the Schwarzian derivative, {\it Arch. Math.} {\bf104} (2015), 53--59.
 \bibitem{H1949}{\sc E. Hille}, Remarks on a paper by Zeev Nehari, {\it Bull. Am. Math. Soc.} {\bf55} (1949), 552--553.
 \bibitem{JO1995}{\sc I. Jovanovi\'c} and {\sc M. Obradovi\'c}, A note on certain classes of univalent functions, {\it Filomat} {\bf9}(1) (1995), 69--72.
\bibitem{J1973} {\sc W. Janowski}, Extremal problems for a family of functions with positive real part and for some related families, {\it Ann. Polon. Math.} \textbf{23} (1973), 159--177 .
\bibitem{KPS2004} {\sc Y. C. Kim, S. Ponnusamy} and {\sc T. Sugawa}, Mapping properties of nonlinear integral operators and pre-Schwarzian derivatives, \textit{J. Math. Anal. Appl.} \textbf{299} (2004), 433--447.

\bibitem{KS2002} {\sc Y. C. Kim} and {\sc T. Sugawa}, Growth and coefficient estimates for uniformly locally univalent functions on the unit disk, {\it Rocky Mountain J. Math.} {\bf 32}  (2002), 179--200.

\bibitem{KS2006}{\sc Y. C. Kim} and {\sc T. Sugawa}, Norm estimates of the pre-Schwarzian derivatives for certain classes of univalent functions, \textit{Proc. Edinb. Math. Soc.} \textbf{49}(1) (2006), 131--143.
\bibitem{K1932} {\sc W. Kraus}, Uber den Zusammenhang eigner Characterstiken eines einfach zusammenhangenden Bereiches mit der Kreisabbildung, {\it Mitt. Math. Sem. Giessen}, {\bf 21}  (1932),  1--28.
\bibitem{K2025}{\sc S. Kumar}, Mapping properties of certain nonlinear integral operators involving Hornich operations, {\it Bull. Malays. Math. Sci. Soc.} {\bf48} (2025), 43.
\bibitem{L1987} {\sc O. Lehto}, Univalent functions and Teichm\"{u}ller spaces, Graduate Texts in Mathematics, vol. 109, Springer-Verlag, New York, 1987.
\bibitem{L1936} {\sc H. Lewy}, On the non-vanishing of the Jacobian in certain one-to-one mappings, {\it Bull. Am. Math. Soc.}, 42 (1936), 689--692.
\bibitem{LP2018}{\sc G. Liu} and {\sc S. Ponnusamy}, Uniformly locally univalent harmonic mappings associated with the pre-Schwarzian norm, {\it  Indag. Math.} {\bf29}(2) (2018), 752--778.
\bibitem{MM1992} {\sc W. Ma} and {\sc D. A. Minda}, Unified treatment of some special classes of univalent functions, In: Proceedings of the Conference on Complex Analysis, Tianjin. Conf. Proc. Lecture Notes Anal., I, Int. Press, Cambridge, MA, pp. 157--169 (1992).
\bibitem{N1949}{\sc Z. Nehari}, The Schwarzian derivative and schlicht functions, {\it Bull. Amer. Math. Soc.}, {\bf 55}(6) (1949), 545--551.
\bibitem{O2000}{\sc Y. Okuyama}, The norm estimates of pre-Schwarzian derivatives of spiral-like functions,  \textit{Complex Var. Theory Appl.} \textbf{42} (2000), 225--239.
\bibitem{O1998}{\sc  B. Osgood}, Old and New on the Schwarzian Derivative, In: P. Duren, J. Heinonen, B. Osgood, B. Palka (eds) Quasiconformal Mappings and Analysis, {\it Springer}, New York, 1998.
\bibitem{O1941}{\sc S. Ozaki}, On the theory of multivalent functions II, {\it Sci. Rep. Tokyo Bunrika Daigaku. Sect. A.} {\bf4} (1941), 45--87.
\bibitem{PPS2008}{\sc  R. Parvatham, S. Ponnusamy} and {\sc S. K. Sahoo}, Norm estimates for the Bernardi integral transforms of functions defined by subordination. \textit{Hiroshima Math. J.} \textbf{38} (2008), 19--29.
\bibitem{PR1995}{\sc S. Ponnusamy} and {\sc S. Rajasekaran}, New sufficient conditions for starlike and univalent functions, {\it Soochow J. Math.} {\bf21}(2) (1995), 193--201.
\bibitem{PS2008}{\sc S. Ponnusamy} and {\sc S. K. Sahoo}, Norm estimates for convolution transforms of certain classes of analytic functions, \textit{J. Math. Anal. Appl.} \textbf{342} (2008), 171--180.

\bibitem{PS2010}{\sc S. Ponnusamy} and {\sc S. K. Sahoo}, Pre-Schwarzian norm estimates of functions for a subclass of strongly starlike functions, \textit{Mathematica (Cluj)} \textbf{52}(75) (2010), 47--53.
\bibitem{S1973}{\sc G. M. Shah}, On holomorphic functions convex in one direction, {\it J. Indian Math. Soc.} {\bf37} (1973), 257--276.

\bibitem{S1966} {\sc J. Stankiewicz}, Quelques probl\`{e}mes extr\'{e}maux dans les classes des fonctions $\alpha$-angulairement \'{e}toil\'{e}es, \textit{Ann. Univ. Mariae Curie-Sk\l odowska Sect. A.} \textbf{20} (1966), 59--75.

\bibitem{S1998} {\sc T. Sugawa}, On the norm of pre-Schwarzian derivatives of strongly starlike functions, \textit{Ann. Univ. Mariae Curie-Sk\l odowska Sect. A} \textbf{52}(2) (1988), 149--157.
\bibitem{U1952}{\sc  T. Umezawa}, Analytic functions convex in one direction, {\it J. Math. Soc. Japan} {\bf4} (1952), 194--202.
\bibitem{UGS1994}{\sc B. A. Uralegaddi, M. D. Ganigi} and {\sc S. M. Sarangi}, Univalent functions with positive coefficients, {\it Tamkang J. Math.} {\bf25}(3) (1994), 225--230.

\bibitem{Y1976} {\sc S. Yamashita}, Almost locally univalent functions, \textit{Monatsh. Math.} \textbf{81} (1976), 235--240.


\bibitem{Y1999}{\sc S. Yamashita}, Norm estimates for function starlike or convex of order alpha, {\it Hokkaido Math. J.} {\bf 28}(1) (1999), 217--230.
\end{thebibliography}
\end{document}